\newcolumntype{P}[1]{>{\centering\arraybackslash}p{#1}}
\newtheorem{theorem}{Theorem}[section]
\newtheorem{lemma}[theorem]{Lemma}
\newtheorem{proposition}[theorem]{Proposition}
\newtheorem{corollary}[theorem]{Corollary}
\theoremstyle{definition}
\newtheorem{remark}[theorem]{Remark}
\newtheorem{example}[theorem]{Example}
\newtheorem{definition}[theorem]{Definition}
\tikzset{
    partial ellipse/.style args={#1:#2:#3}{
        insert path={+ (#1:#3) arc (#1:#2:#3)}
    }
}
\renewcommand{\arraystretch}{1.1}
\def\l@subsection{\@tocline{2}{0pt}{2.5pc}{5pc}{}}
\author{Sandra Di Rocco}
\address{Department of Mathematics, KTH Royal Institute of Technology, SE-100 44 Stockholm, Sweden}
\email{dirocco@math.kth.se}
\author{Lukas Gustafsson}
\address{Department of Mathematics, KTH Royal Institute of Technology, SE-100 44 Stockholm, Sweden}
\email{lukasgu@math.kth.se}
\author{Luca Schaffler}
\address{Dipartimento di Matematica e Fisica, Universit\`a degli Studi Roma Tre, Largo San Leonardo Murialdo 1, 00146, Roma, Italy}
\email{luca.schaffler@uniroma3.it}
\subjclass{62R01, 14C17, 15A15}
\keywords{Maximum likelihood degree, characteristic class, dual variety}
\title[Gaussian likelihood geometry of projective varieties]{Gaussian likelihood geometry of projective varieties}
\begin{document}

\maketitle

\begin{abstract}
We explore the maximum likelihood degree of a homogeneous polynomial $F$ on a projective variety $X$, $\mathrm{MLD}_F(X)$, which generalizes the concept of Gaussian maximum likelihood degree. We show that $\mathrm{MLD}_F(X)$ is equal to the count of critical points of a rational function on $X$, and give different geometric characterizations of it via topological Euler characteristic, dual varieties, and Chern classes.
\end{abstract}

\section{Introduction}

Let $\mathbb{S}^n$ denote the space of complex symmetric $n\times n$ matrices. For a choice of generic matrix $S \in\mathbb{S}^n$, consider the \emph{Gaussian log likelihood function}
\begin{equation*}
\ell_{S}(M)=\log(\det(M))-\mathrm{tr}(SM).
\end{equation*}
The number of invertible critical points of $\ell_{S}$, which we count with multiplicity, on the affine cone of a linear space $\mathcal{L}\subseteq\mathbb{P}(\mathbb{S}^n)$ is called the \emph{Gaussian maximum likelihood degree of $\mathcal{L}$} or Gaussian ML degree for short (this is independent from $S$ if it is picked generically). This concept has been intensively studied, see for instance \cite{SU10,MMMSV23,AGKMS21}. We give a more detailed account in Section~\ref{sec:Application-to-Gaussian-ML-degree}. This definition can be generalized by considering instead of a linear space $\mathcal{L}$, any irreducible subvariety $X\subseteq\mathbb{P}(\mathbb{S}^n)$. 

From a different angle, in \cite{CHKS06} the authors provide a geometrical setting for the study of the critical points of $\log f$ for an arbitrary rational function on $X\subseteq\mathbb{P}^n$ smooth and irreducible. They do so via characteristic classes such as Chern classes and topological Euler characteristic. Our objective is to provide a similar toolbox for an appropriate generalization of the Gaussian ML degree. 

Starting from a homogeneous polynomial $F\in\mathbb{C}[x_0,\ldots,x_n]$, we can construct a \textit{log function} by considering
\begin{equation}
\label{eq:log-function-of-reference}
\ell_{F,u}(x):=\log(F(x)) - u(x),
\end{equation}
where $u(x)$ is a generic linear polynomial. We will be concerned with understanding the number of critical points (counted with multiplicities) of $\ell_{F,u}|_{C_X^\circ}$, where $C_X\subseteq\mathbb{C}^{n+1}$ is the affine cone over $X$ and $C_X^\circ$ the open subset of the smooth locus of $C_X$ where $F\neq0$. This class of functions fits the framework presented in \cite{KKS21}. In particular, for a generic choice of $u$, the number of critical points of $\ell_{F,u}|_{C_X^\circ}$ is independent from $u$. We denote this number by $\mathrm{MLD}_F(X)$. Section~\ref{sec:The-Maximum-Likelihood-Degree} provides a self-contained scheme theoretical account of this framework, largely already treated in \cite{KKS21}, for the reader's convenience and to set up appropriate notations. 

This paper provides a number of different characterizations of  $\mathrm{MLD}_F(X)$. Consider an irreducible variety $X = V(g_1, \ldots, g_k) \subseteq \mathbb{P}^n$. Moreover, as introduced previously, consider a generic linear polynomial $u(x)$. We first prove a geometric characterization.

\begin{theorem} 
[Corollary~\ref{Gaussian-geometric-schemes-are-equal}] The number  $\mathrm{MLD}_F(X)$ equals the number of critical points of $\log f_{F,u}$ away from $D_{F,u}:= \mathrm{Supp}(\mathrm{div}(f_{F,u}))$, where $f_{F, u}$ is the rational function on $X$ given by $ F/u^{\deg(F)}$ for a generic linear polynomial $u(x)$.
\end{theorem}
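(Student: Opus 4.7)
The plan is to exploit the $\mathbb{C}^*$-action on the cone. Set $d := \deg F$. Since $F/\eta^d$ is homogeneous of degree $0$, the rational function $f_{F,\eta}$ descends from $C_X^\circ \setminus V(\eta)$ to $X \setminus D_{F,\eta}$. At any point $p \in C_X^\circ \setminus V(\eta)$ the cone structure provides a splitting
\[
T_p C_X = \mathbb{C}\cdot E_p \oplus H_p,
\]
where $E_p = p$ is the Euler vector and $H_p$ maps isomorphically to $T_{[p]} X$ under the quotient $C_X^\circ \setminus \{0\} \to X$. The strategy is to verify the critical-point equation for $\ell_{F,\eta}$ direction by direction with respect to this splitting.

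Along the Euler direction, Euler's identity gives $E_p(\log F) = d$, while linearity of $\eta$ gives $E_p(\eta) = \eta(p)$, so $d\ell_{F,\eta}(E_p) = d - \eta(p)$. Hence every critical point of $\ell_{F,\eta}|_{C_X^\circ}$ lies on the affine slice $\{\eta = d\}$. On this slice, and for $v \in H_p$,
\[
v(\ell_{F,\eta}) = \frac{v(F)}{F(p)} - v(\eta) = \frac{v(F)}{F(p)} - \frac{d\, v(\eta)}{\eta(p)} = v(\log f_{F,\eta}),
\]
so the vanishing of $d\ell_{F,\eta}$ on $H_p$ is, after identifying $H_p$ with $T_{[p]} X$, equivalent to $[p]$ being a critical point of $\log f_{F,\eta}$ on $X \setminus D_{F,\eta}$.

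Together these observations yield an explicit bijection: to a critical point $[q] \in X \setminus D_{F,\eta}$ one assigns the unique lift $q = (d/\eta(q'))\, q' \in C_X^\circ$ with $\eta(q) = d$ (well defined because $[q] \notin V(\eta) \cap X \subseteq D_{F,\eta}$), and conversely any critical $p$ of $\ell_{F,\eta}$ on $C_X^\circ$ satisfies $\eta(p) = d$ and projects to a critical $[p] \in X \setminus D_{F,\eta}$. To upgrade to equality of the two counts with multiplicity, I would write the Hessian of $\ell_{F,\eta}$ at $p$ in the block form induced by the splitting; the $1 \times 1$ Euler block equals $\partial_t(d/t - \eta(p))|_{t=1} = -d \neq 0$, so it is invertible and the local multiplicity of the critical scheme on the cone matches that of its image on $X$.

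The main technical obstacle I anticipate is promoting this set-theoretic and infinitesimal bijection to an isomorphism of the full critical schemes in the sense of the framework set up earlier in the paper, so that $\mathrm{MLD}_X(F)$ really equals the count of critical points on $X \setminus D_{F,\eta}$ with multiplicity. This should follow from the block structure above together with the genericity of $\eta$, which keeps the critical locus away from both $V(\eta) \cap X$ and the singular locus of $X$ (contained in $V(F)$).
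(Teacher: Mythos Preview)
Your argument is correct and rests on the same two facts the paper uses: Euler's identity forces every critical point of $\ell_{F,\eta}|_{C_X^\circ}$ onto the affine slice $W_{F,\eta}=\{\eta=d\}$ (the paper's Lemma~\ref{lem: trace}), and on that slice the differentials of $\ell_{F,\eta}$ and of $\log f_{F,\eta}$ (pulled back via the isomorphism $W_{F,\eta}\cong X\setminus D_{F,\eta}$ of Lemma~\ref{lem:affine-patch}) coincide. Where you differ is in packaging: the paper proceeds algebraically, writing down the two ideals $I_A=I_{c+1}(A(x,\eta))$ and $I_B=I_{c+2}(B(x,\eta))+\langle\eta(x)-d\rangle$ in $\mathbb{C}[C_X^\circ]$ and proving they are literally equal (Theorem~\ref{Gaussian-rational-schemes-are-equal}), hence obtaining a scheme isomorphism valid for \emph{every} nonzero $\eta$; the Corollary for generic $\eta$ is then immediate. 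Your differential-geometric route goes straight for the generic case and is more elementary, at the cost of not yielding the stronger scheme-level statement.

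Two small points about your writeup. First, the cone structure alone does not single out a complement $H_p$; for the Hessian block argument you should take $H_p=\ker(d\eta)\cap T_pC_X=T_pW_{F,\eta}$, and then one checks that the off-diagonal block actually vanishes (indeed $\mathrm{Hess}(\log F)(E_p,v)=-v(F)/F(p)=-d\eta(v)$ for $v\in T_pC_X$ at a critical point), so the Hessian is genuinely block-diagonal with Euler block $-d$. Second, since for generic $\eta$ the critical scheme on $C_X^\circ$ is already reduced (Lemma~\ref{lem:reduced-crit-pts}), your set-theoretic bijection together with this block-diagonal Hessian already shows the downstairs critical points are reduced too; you do not need to promote anything to a full scheme isomorphism to conclude the Corollary.
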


As a corollary, we obtain the following statement for the Gaussian ML degree.

\begin{corollary}
Let $X\subseteq\mathbb{P}(\mathbb{S}^n)$ be an irreducible closed subvariety. Then the Gaussian ML degree of $X$ equals the number of critical points of $\log f_{\det, S}$ away from $D_{\det, S}:= \mathrm{Supp}(\mathrm{div}(f_{\det, S}))$, where $f_{\det, S}(M)$ is the rational function on $X$ given by $\det(M)/\mathrm{tr}(SM)^n$ for a generic $S\in\mathbb{S}^n$.
\end{corollary}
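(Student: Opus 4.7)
The plan is to derive this corollary as a direct specialization of Theorem~\ref{Gaussian-geometric-schemes-are-equal} by taking $F = \det$ and identifying the generic linear polynomial $\eta$ via the trace pairing on $\mathbb{S}^n$. The starting observation is that the Gaussian log likelihood function $\ell_S(M) = \log(\det(M)) - \tr(SM)$ is, on the nose, the general log function $\ell_{F,\eta}(x)$ once we set $F = \det$ and $\eta(M) = \tr(SM)$. Consequently, the number of invertible critical points of $\ell_S$ on the affine cone over $X$ restricted to the locus where $\det$ does not vanish is, by definition, $\mathrm{MLD}_X(\det)$, i.e.\ the Gaussian ML degree of $X$.

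To invoke the theorem, I would verify that $S$ being generic in $\mathbb{S}^n$ corresponds to $\eta$ being a generic linear form on $\mathbb{P}(\mathbb{S}^n)$. This reduces to the non-degeneracy of the trace pairing $(S,M) \mapsto \tr(SM)$ on $\mathbb{S}^n$, which identifies $\mathbb{S}^n$ with its own linear dual via $S \mapsto \tr(S\,\cdot\,)$. Hence a generic symmetric matrix $S$ gives a generic linear functional on $\mathbb{S}^n$, and conversely, so the conclusion of Theorem~\ref{Gaussian-geometric-schemes-are-equal} can be transported without loss.

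With these identifications, the conclusion of Theorem~\ref{Gaussian-geometric-schemes-are-equal} becomes the statement that $\mathrm{MLD}_X(\det)$ equals the number of critical points of $\log f_{F,\eta}$ on $X$ away from $D_{F,\eta}$. But $f_{F,\eta} = F/\eta^{\deg F}$ now reads $\det(M)/\tr(SM)^n$, which is exactly the rational function $f_{\det, S}$ in the statement, and $D_{F,\eta}$ becomes $D_{\det, S}$. The smoothness hypothesis ``smooth away from $V(\det)$'' matches verbatim the smoothness hypothesis ``smooth away from $V(F)$'' of the theorem.

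No substantive obstacle is anticipated: the argument is essentially an unfolding of definitions, and the only non-formal ingredient is the self-duality of $\mathbb{S}^n$ under the trace pairing, which is standard. The one point worth stating explicitly in the write-up is the equivalence between ``generic $S \in \mathbb{S}^n$'' and ``generic linear form $\eta$ on $\mathbb{P}(\mathbb{S}^n)$'', so that the genericity clauses in the theorem and the corollary are seen to be interchangeable.
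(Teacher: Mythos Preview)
Your proposal is correct and follows essentially the same approach as the paper: specialize Corollary~\ref{Gaussian-geometric-schemes-are-equal} to $F=\det$, use the trace pairing to identify linear forms on $\mathbb{S}^n$ with symmetric matrices (as the paper notes in Section~\ref{sec:Application-to-Gaussian-ML-degree}), and observe that the Gaussian ML degree is by definition $\mathrm{MLD}_X(\det)$. The only cosmetic discrepancy is the constant $\deg(F)^{\deg(F)}=n^n$ in the paper's definition of $f_{F,\eta}$, which is irrelevant for counting critical points of the logarithm.
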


We note that the equality between Gaussian ML degree and the number of critical points of a rational function gives new ways to compute the former. The following theorem collects Theorems~\ref{thm:crit-degree-as-Euler-characteristic}, \ref{thm:MLD-via-dual-variety}, and \ref{thm:Gauss-ML-degree-using-tot-Chern-class} respectively, giving different ways of computing $\mathrm{MLD}_F(X)$.

\begin{theorem}
With assumptions as above the following statements hold:
\begin{enumerate}

\item $\mathrm{MLD}_F(X)=(-1)^{\dim(X)}\chi(\mathrm{Eu}_{X}|_{X\setminus D_{F,u}})$, where $\mathrm{Eu}_X$ is the local Euler obstruction function on $X$. (We have that $\chi(\mathrm{Eu}_{X}|_{X\setminus D_{F,u}})=\chi_{\mathrm{top}}(X\setminus D_{F,u})$ if $X$ is smooth away from $V(F)$).

\item Let $\overline{\mathbb{X}}_F \subseteq \mathbb{P}^{n+1}$ be the projective closure of  $\mathbb{X}_F = V(g_1, \ldots, g_k, F(x) - 1) \subseteq \mathbb{C}^{n+1}$ and let $\overline{\mathbb{X}}^{\vee}_F$ denote its dual variety. Let $m(\overline{\mathbb{X}}_F^\vee)$ be the multiplicity of $\overline{\mathbb{X}}_F^\vee$ at the point $[0:\ldots:0:1]$. Then
\begin{displaymath}
\mathrm{MLD}_F(X) = \left\{ \begin{array}{ll}
\frac{\deg(\overline{\mathbb{X}}_F^{\vee})-m(\overline{\mathbb{X}}_F^\vee)}{\deg(F)} &\textrm{if}~\overline{\mathbb{X}}_F^{\vee}~\text{is a hypersurface},\\
0&\textrm{otherwise}.
\end{array} \right.
\end{displaymath}

\item Let $\pi\colon(Y, B) \to (X, D_{F,u})$ be a log resolution, where $B=\pi^{-1}(D_{F,u})$ and $X$ is assumed to be smooth away from $V(F)$. If $B = \mathrm{Supp}(\mathrm{div}(f_{F, u} \circ \pi)) = \sum_{i=1}^r B_i$, where the latter equality is the decomposition into irreducible components, then $\mathrm{MLD}_F(X)$ is the degree of the coefficient in front of $z^d$ in
\[
c_{\mathrm{tot}}(\Omega_Y) \prod_{i=1}^r (1 - zB_i)^{-1} \in A^*(Y)[z].
\]

\end{enumerate}
\end{theorem}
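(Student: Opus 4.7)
The plan is to prove the three assertions in the order they are listed, each characterization building on the preceding ones. Throughout I freely use Theorem~\ref{Gaussian-geometric-schemes-are-equal}, which recasts $\mathrm{MLD}_X(F)$ as the number of isolated critical points of $\log f_{F,\eta}$ on $U := X \setminus D_{F,\eta}$. For (1), the strategy is to apply to $U$ the Huh-type theorem that, for a smooth complex quasi-projective variety equipped with a sufficiently generic multi-valued master function, the number of isolated critical points equals $(-1)^{\dim U} \chi_{\mathrm{top}}(U)$. First I would verify via a Bertini-type argument that, for generic $\eta$, the divisor $D_{F,\eta}$ has the expected components meeting transversely where $X$ is smooth, so that $\log f_{F,\eta}$ has only isolated nondegenerate critical points on $U$ with none escaping to the compactification boundary; with this in place the formula $\mathrm{MLD}_X(F) = (-1)^{\dim X} \chi_{\mathrm{top}}(U)$ follows immediately.

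For (2), the key auxiliary object is $\mathbb{X}_F \subseteq \mathbb{C}^{n+1}$, which admits a natural projection to $X \setminus V(F)$ whose fibers are orbits of the $\deg(F)$-th roots of unity acting by rescaling; this projection is étale of degree $\deg(F)$. Since $F \equiv 1$ on $\mathbb{X}_F$, the critical locus of $\ell_{F,\eta}$ restricted to $\mathbb{X}_F$ coincides with that of $-\eta$. Classically, a smooth point $x \in \mathbb{X}_F$ is critical for $\eta$ precisely when the affine hyperplane $\{\eta = \eta(x)\}$ is tangent to $\overline{\mathbb{X}}_F$ at $x$, and homogenizing shows that such tangent hyperplanes correspond to points of $\overline{\mathbb{X}}_F^{\vee}$ on the line $\ell \subseteq (\mathbb{P}^{n+1})^{\vee}$ spanned by $[\eta]$ and the class $p = [0:\ldots:0:1]$ of the hyperplane at infinity. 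When $\overline{\mathbb{X}}_F^{\vee}$ is a hypersurface, $\ell$ meets it in $\deg(\overline{\mathbb{X}}_F^{\vee})$ points counted with multiplicity, of which the contribution at $p$ equals $m_p(\overline{\mathbb{X}}_F^{\vee})$; these correspond to critical points at infinity and must be discarded. Dividing the remaining count by $\deg(F)$ accounts for the étale cover and yields the formula. When $\overline{\mathbb{X}}_F^{\vee}$ is not a hypersurface, a generic such $\ell$ misses it entirely and $\mathrm{MLD}_X(F) = 0$.

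For (3), I combine part (1) with a Chern-class computation on the log resolution $\pi\colon (Y, B) \to (X, D_{F,\eta})$. Since $\pi$ restricts to an isomorphism $Y \setminus B \xrightarrow{\sim} U$, part (1) identifies $\mathrm{MLD}_X(F)$ with $(-1)^{\dim Y} \chi_{\mathrm{top}}(Y \setminus B)$, and the logarithmic Gauss--Bonnet theorem for the log smooth pair $(Y, B)$ rewrites this as $\int_Y c_{\dim Y}(\Omega_Y(\log B))$. To evaluate $c(\Omega_Y(\log B))$ I use the Poincaré residue sequence
\[
0 \to \Omega_Y \to \Omega_Y(\log B) \to \bigoplus_{i=1}^r \iota_{i,*}\mathcal{O}_{B_i} \to 0,
\]
together with the structure sequences $0 \to \mathcal{O}_Y(-B_i) \to \mathcal{O}_Y \to \iota_{i,*}\mathcal{O}_{B_i} \to 0$; the Whitney product formula then gives $c(\Omega_Y(\log B)) = c(\Omega_Y) \prod_{i=1}^r (1 - B_i)^{-1}$. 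Introducing a formal parameter $z$ to track codimension recovers the stated generating series in $A^*(X)[z]$, and the coefficient of $z^{\dim Y}$ is precisely $c_{\dim Y}(\Omega_Y(\log B))$, whose degree equals $\mathrm{MLD}_X(F)$.

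The main obstacle I anticipate lies in the genericity verification underpinning part (1): one must ensure that for generic linear $\eta$ the master function $\log f_{F,\eta}$ has no critical points escaping to $D_{F,\eta}$ and only nondegenerate critical points on $U$, so that the count genuinely computes the signed topological Euler characteristic rather than merely bounding it. Once this is secured, the remainder of the argument reduces to standard intersection-theoretic and characteristic-class manipulations, plus the careful identification at the point $p$ in (2) to account for the excess intersection coming from the hyperplane at infinity.
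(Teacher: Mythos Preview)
Your outline for parts (2) and (3) is essentially correct and close to the paper's approach, but there is a genuine gap in your plan for part (1), and since your argument for (3) rests on (1), this gap propagates.

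\textbf{Part (1).} The ``Huh-type theorem'' you invoke requires the exponents of the master function to be generic. Here $\log f_{F,\eta}=\log F-\deg(F)\log\eta$ has exponents $(1,-\deg F)$ fixed once and for all; the only freedom is in the linear form $\eta$. The results of \cite{CHKS06} and \cite{Huh13} do not directly yield the Euler-characteristic count in this situation, and a Bertini argument on the geometry of $D_{F,\eta}$ alone does not rule out that $d\log f_{F,\eta}$ (viewed as a section of the log cotangent sheaf on a compactification) acquires zeros along the boundary. The paper sidesteps this entirely by the very construction you already introduce for part (2): passing to the \'etale $\deg(F)$-cover $\mathbb{X}_F\to X\setminus V(F)$ makes $F\equiv1$, so critical points of $\log f_{F,\eta}$ pull back to critical points of the \emph{linear} function $\eta|_{\mathbb{X}_F}$. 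One then applies \cite[Theorem~3.1]{MRW20}, which equates the number of critical points of a generic linear function on a smooth affine variety with a signed Euler characteristic, and divides by $\deg(F)$. In short: you have the key object $\mathbb{X}_F$ in hand for (2) but do not realize it is also the missing ingredient for (1).

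\textbf{Part (2).} Your sketch matches the paper's argument, but two points need care. First, tangent hyperplanes to $\overline{\mathbb{X}}_F$ at points with $t=0$ (not just the hyperplane $\{t=0\}$ itself) could in principle contribute to $L_\eta\cap\overline{\mathbb{X}}_F^\vee$ away from $p$; the paper rules this out by showing that $\pi_2(\pi_1^{-1}(\{t=0\}\cap\overline{\mathbb{X}}_F))$ has codimension $\geq2$ and that its join with $p$ is a proper subvariety, so a generic $\eta$ avoids it. Second, in the non-hypersurface case $L_\eta$ is not a generic line (it is constrained to pass through $p$), so ``a generic $\ell$ misses it'' needs the same join argument.

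\textbf{Part (3).} Your route via (1), logarithmic Gauss--Bonnet, and the residue sequence is correct and is essentially what underlies \cite[Corollary~5]{CHKS06}, which the paper cites directly. The paper's proof instead applies that corollary to $\widetilde{f}_{F,\eta}$ on $Y$, after verifying the hypothesis that $B$ meets every curve on $Y$ (ampleness of $D_{F,\eta}$ plus $B$ containing all exceptional divisors). Either route works once (1) is established; note, however, that one could also run the logic in reverse---prove (3) first via \cite{CHKS06} with the ``meets every curve'' check, then deduce (1) by log Gauss--Bonnet---thereby avoiding the obstacle you identify.
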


The equality in Theorem~\ref{thm:crit-degree-as-Euler-characteristic} in the case of the Gaussian ML degree for a linear space $\mathcal{L}\subseteq\mathbb{P}(\mathbb{S}^n)$ was already implicit in the literature by combining \cite{DMS21, DP03}. Finally, the formula in Theorem~\ref{thm:Gauss-ML-degree-using-tot-Chern-class} is an application of \cite[Corollary~5]{CHKS06}.

We show how the previous result is applied via a simple running example of a variety that is statistically feasible, in the sense that it contains positive definite matrices. 

\begin{example}
\label{ex:running-example-intro}
Consider the linear space $\mathcal{L}\subseteq\mathbb{P}(\mathbb{S}^3)$ given by the span of $\left[\begin{smallmatrix}1&0&0\\0&0&0\\0&0&0\end{smallmatrix}\right]$, $\left[\begin{smallmatrix}0&1&0\\1&0&0\\0&0&0\end{smallmatrix}\right]$, and $\left[\begin{smallmatrix}0&0&0\\0&1&0\\0&0&1\end{smallmatrix}\right]$. 
It can be interpreted as a linear concentration model of Gaussian probability distributions, appropriate for maximum likelihood estimation. 
In this context we consider $F=\det$ restricted to $\mathcal{L}$. This yields a reducible plane curve $\det\left(\left[\begin{smallmatrix}x&y&0\\y&z&0\\0&0&z\end{smallmatrix}\right]\right)=z(xz-y^2)=0$ in $\mathcal{L}$, pictured on the left of Figure~\ref{fig:running-example}. The reader can find the above three methods applied in Examples~\ref{ex:running-example-topological-Euler-characteristic}, \ref{ex:running-example-dual-variety}, and \ref{ex:running-example-tot-CHern-class}. These methods show that $\mathrm{MLD}_{\det}(\mathcal{L})=1$. The second method uses the cubic surface $\mathbb{X}_{\det}=V(z(xz-y^2)-1)\subseteq\mathbb{C}^3$ on the right of Figure~\ref{fig:running-example}. In Example~\ref{ex:running-example-Milnor} we also compute $\mathrm{MLD}_{\det}(\mathcal{L})$ using Milnor numbers.
\end{example}

\begin{figure}[H]
\centering
\begin{subfigure}{.3\textwidth}
\begin{tikzpicture}[scale=0.6]

	\draw[line width=1.5pt] (-4,0) -- (4,0);
	
	\draw[line width=1.5pt] (0,3) [partial ellipse=0:360:2cm and 3cm];

\end{tikzpicture}
\end{subfigure}
\hspace{0.5in}
\begin{subfigure}{.4\textwidth}
  \centering
  \includegraphics[width=\linewidth]{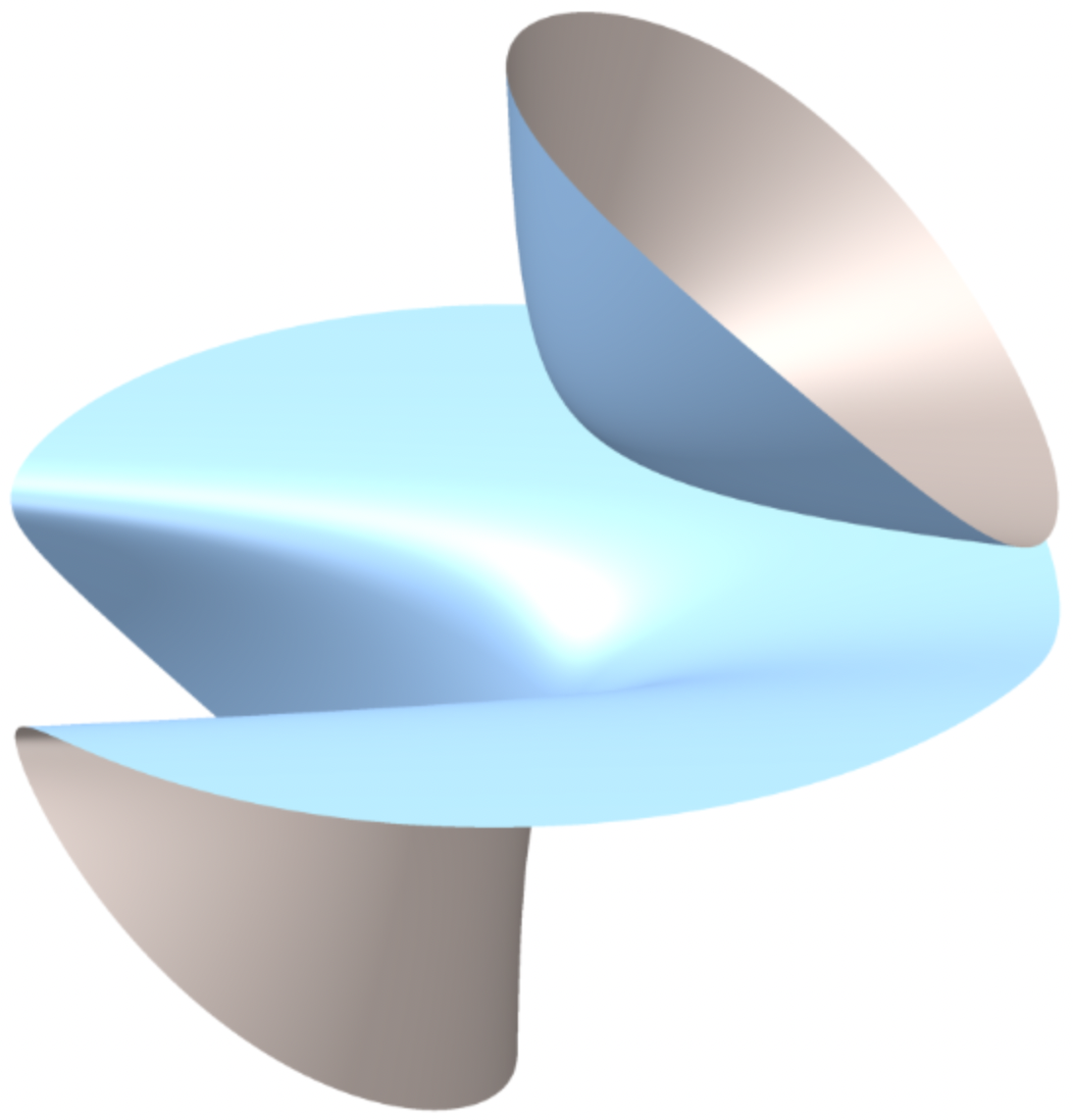}
\end{subfigure}
\caption{With reference to Example~\ref{ex:running-example-intro}, on the left the curve $V(F)$ on $\mathcal{L}$. On the right, the affine variety $\mathbb{X}_{\det}$ as defined in Theorem~\ref{thm:MLD-via-dual-variety} (made with SURFER).}
\label{fig:running-example}
\end{figure}

The developed theory relates well and gives some advances to the two most commonly explored concepts of ML degree: The Gaussian, as we already discussed, and the {\it discrete} ML degree.
In Section~\ref{sec:Application-to-Gaussian-ML-degree} we illustrate how above results apply to Gaussian maximum likelihood degree. In Section~\ref{discrete} we relate our approach to the {\it discrete maximum likelihood degree} studied in \cite{CHKS06,Huh13}. More precisely we show that:

\begin{theorem}[Theorem~\ref{ineqDiscrete}]
View $\mathbb{P}^n\subseteq\mathbb{P}(\mathbb{S}^n)$ as the projectivized space of diagonal matrices and let $X \subseteq \mathbb{P}^n$ be a closed subvariety. Then
\[
\mathrm{Discrete~ML~degree}(X) \leq \mathrm{Gaussian~ML~degree}(X).
\]
\end{theorem}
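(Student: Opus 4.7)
The plan is to reduce both sides of the inequality to signed topological Euler characteristics of open subvarieties of $X$, and then to deduce the inequality from a Milnor-fiber computation as the ``extra hyperplane'' in the boundary divisor specializes from generic to $V(x_0+\cdots+x_n)$.

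First, I would apply Theorem~\ref{thm:crit-degree-as-Euler-characteristic} with $F=x_0x_1\cdots x_n$, the restriction of $\det$ to the subspace of diagonal matrices: since $\mathrm{tr}(SM)$ for $M=\mathrm{diag}(x_0,\dots,x_n)$ is $\sum_iS_{ii}x_i$, which is a generic linear form when $S\in\mathbb{S}^n$ is generic, setting $X^*:=X\setminus V(x_0\cdots x_n)$ gives
\[
\mathrm{Gaussian~ML~degree}(X)=(-1)^{\dim X}\chi_{\mathrm{top}}\bigl(X^*\setminus V(\eta)\bigr)
\]
for a generic linear form $\eta$. The main theorem of \cite{Huh13} provides the analogous identification
\[
\mathrm{Discrete~ML~degree}(X)=(-1)^{\dim X}\chi_{\mathrm{top}}\bigl(X^*\setminus V(x_0+\cdots+x_n)\bigr).
\]
The hypothesis that $X$ is smooth away from $V(\prod_ix_i)$ ensures that $X^*$ is a smooth quasi-projective variety, which is crucial for what follows.

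Next, I would connect the two extra hyperplanes by the pencil $H_t=V\bigl((1-t)\eta+t(x_0+\cdots+x_n)\bigr)$, $t\in\mathbb{A}^1$. By Bertini applied to the smooth variety $X^*$, the section $H_t\cap X^*$ is smooth for $t$ in a dense open subset of $\mathbb{A}^1$. In the generic case, $H_1\cap X^*$ has at worst isolated singularities at points $p_1,\dots,p_k\in X^*$ with Milnor numbers $\mu_1,\dots,\mu_k\ge 0$, and the Milnor fiber formula yields
\[
\chi(H_1\cap X^*)-\chi(H_{\mathrm{gen}}\cap X^*)=(-1)^{\dim X}\sum_{i=1}^k\mu_i.
\]
Combining this with the open complement identity $\chi(X^*\setminus H_t)=\chi(X^*)-\chi(X^*\cap H_t)$ and multiplying by $(-1)^{\dim X}$ yields
\[
\mathrm{Discrete~ML~degree}(X)-\mathrm{Gaussian~ML~degree}(X)=-\sum_{i=1}^k\mu_i\le 0,
\]
which is the desired inequality.

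The main obstacle is to cover the degenerate case when $H_1\cap X^*$ has a positive-dimensional singular locus, so that the elementary Milnor fiber formula above does not apply directly. One option is to replace $\sum\mu_i$ by the Euler characteristic of the vanishing cycle sheaf of the family $\{H_t\cap X^*\}$ along $t=1$, which remains non-negative by standard constructibility arguments and therefore preserves the inequality. An alternative, more in line with the paper's toolkit, is to invoke Theorem~\ref{thm:Gauss-ML-degree-using-tot-Chern-class}: taking a common log resolution of the pair $\bigl(X,V(x_0\cdots x_n\cdot\eta\cdot(x_0+\cdots+x_n))\bigr)$ and comparing the total Chern class expressions for the two boundary configurations, the difference is a sum of contributions from the exceptional divisors introduced to resolve $X\cap V(\prod_ix_i)\cap V(\sum_ix_i)$; a local computation then shows that these contributions carry the correct sign, again producing the inequality in full generality.
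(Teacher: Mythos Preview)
Your reduction of both sides to signed Euler characteristics is correct and matches the paper's Corollary~\ref{cor:GaussMLasEulerchar} together with Huh's theorem. The problem is the next step: the Milnor fiber identity
\[
\chi(H_1\cap X^*)-\chi(H_{\mathrm{gen}}\cap X^*)=(-1)^{\dim X}\sum_i\mu_i
\]
is a statement about \emph{proper} one-parameter families. Your family $\{H_t\cap X^*\}_t$ is not proper over the $t$-line, and points of $H_t\cap X^*$ can escape into the boundary $X\cap V(\prod_i x_i)$ as $t\to 1$. When this happens the Euler characteristic jumps without any Milnor contribution inside $X^*$, so your formula undercounts.

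A concrete failure: take $X=V(x_0+x_1)\subseteq\mathbb{P}^2$, a line. Then $X^*\cong\mathbb{C}^*$ and a generic hyperplane meets $X^*$ in one point, while $H_1=V(x_0+x_1+x_2)$ meets $X$ only at $[1:-1:0]\in V(x_2)$, hence $H_1\cap X^*=\emptyset$. Thus $\chi(H_1\cap X^*)-\chi(H_{\mathrm{gen}}\cap X^*)=0-1=-1$, yet there are no singular points of $H_1\cap X^*$ at all, so your right-hand side is $0$. (Here the discrete ML degree is $0$ and the Gaussian ML degree is $1$, so the theorem is fine; it is your mechanism that fails.) The same phenomenon obstructs your vanishing-cycle fallback: the identity $\chi(\text{special})-\chi(\text{generic})=-\chi(\varphi_f)$ again needs properness, and in the example $\varphi_f$ is the zero sheaf on the empty set. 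Constructibility alone gives no sign information on $\chi$ of a perverse sheaf, so ``standard constructibility arguments'' do not rescue the claim. The Chern-class alternative you sketch would likewise have to control boundary contributions whose sign is not evident.

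The paper avoids topology at this step entirely. It realizes the discrete ML degree as a particular \emph{special fiber} of the second projection $\pi_2\colon\mathcal{I}\to\mathbb{C}^{n+1}$ from the incidence variety built with $F=\prod_i x_i^{u_i}$, shows that the generic degree of $\pi_2$ equals the Gaussian ML degree (via an auxiliary diagonal embedding into $\mathbb{P}(\mathbb{S}^{\sum_i u_i})$), and then invokes Zariski's Main Theorem to factor $\pi_2$ as an open immersion followed by a finite map. Upper semicontinuity of fiber cardinality for finite maps gives $\#\pi_2^{-1}(1,\ldots,1)\le\deg(\pi_2)$ directly. If you want to pursue your line, you would need to compactify the family and prove that the \emph{boundary} contributions to the Euler-characteristic jump also carry the correct sign; this is exactly what the paper's algebraic argument sidesteps.
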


\section*{Acknowledgements} We would like to thank Kathl\'en Kohn, Orlando Marigliano, Mateusz Micha\l ek, Felix Rydell, and Bernd Sturmfels for helpful conversations. In particular, we thank Laurentiu G. Maxim for suggesting the extension of our original results to the non-smooth case. We also thank the anonymous referees for the valuable comments and suggestions. The first and second author are supported by the VR grant [NT:2018-03688]. The third author is a member of the INdAM group GNSAGA and was partially supported by the projects ``Programma per Giovani Ricercatori Rita Levi Montalcini'', PRIN2017SSNZAW ``Advances in Moduli Theory and Birational Classification'', PRIN2020KKWT53 ``Curves, Ricci flat varieties and their Interactions'', and, while at KTH, by a KTH grant by the Verg foundation.

\section{The maximum likelihood degree}
\label{sec:The-Maximum-Likelihood-Degree}

In this section we revisit the concept of algebraic degree introduced in \cite{NR09}, and recently explored for a more general class of objective functions in \cite{KKS21}. We take a scheme theoretical approach, that will allow us to establish a bridge to rational functions, later explored in Section~\ref{sec:geometric-formulation}. Additionally, on the way we are able to prove some stronger results about the structure of the critical point locus for the specific class of functions $\ell_{F,u}$.

In what follows, we fix the canonical basis of $\mathbb{C}^{n+1}$, which induces an identification $\mathbb{C}^{n+1}\cong(\mathbb{C}^{n+1})^\vee$ and a bilinear form on $\mathbb{C}^{n+1}$. Therefore, if $u\in\mathbb{C}^{n+1}$ and we want to view it as a functional, then we let $u(x):=u\cdot x$.

Let $F \in \mathbb{C}[x_0, \ldots, x_n]$ be a homogeneous polynomial and let $I(X)=(g_1,\ldots,g_k)$ be the homogeneous ideal in $\mathbb{C}[x_0, \ldots, x_n]$ defining $X\subseteq\mathbb{P}^n$ irreducible and of codimension $c$. We let $C_X^\circ$ be the open subset of the smooth locus of $C_X$ where $F\neq0$. Note that we have an open immersion
\[
C_X^\circ \subseteq \mathrm{Spec}(\mathbb{C}[x_0, \ldots, x_n, 1/F]/I(X)).
\]

\begin{definition}
\label{def:principal-open-from-minor-of-Jacobian-matrix}
With the notation established above, we have that $C_X^\circ$ is equipped with a covering of principal open subsets, denoted by $U_\mu$, of $\mathrm{Spec}(\mathbb{C}[x_0, \ldots, x_n, 1/F]/I(X))$ where some $c\times c$ minor $\mu$ of the Jacobian matrix of $g_1, \ldots, g_k$ is nonzero. That is,
\[
U_\mu:=\mathrm{Spec}(\mathbb{C}[x_0, \ldots, x_n, 1/F, 1/\mu]/I(X)) \subseteq C_X^\circ.
\]
\end{definition}

If $M=(m_{ij})$ is an $m\times n$ matrix over a ring $R$ and $k\leq \min\{m,n\}$, we define $I_k(M)$ to be the ideal of $R$ generated by the $k\times k$ minors of $M$.

\begin{definition}
\label{def:A(x,eta)-and-critical-points-ell}
Let $A(x,u)=A_{F,X}(x,u)$ be the matrix given by
\[
A(x,u)=\begin{bmatrix}
\frac{\nabla F}{F} - u \\
\nabla g_1\\
\vdots\\
\nabla g_k \\
\end{bmatrix} \in \mathrm{Mat}_{(k+1) \times (n+1)}(\mathbb{C}[x_0, \ldots, x_n, 1/F]).
\]
\end{definition}

It follows from the theory of Lagrange multipliers that the scheme of critical points of $\ell_{F, u}|_{C_X^\circ}$ on the principal open set $U_\mu$ is cut out by the following ideal:
\[
I_{c+1}(A(x,u))  \subseteq \mathbb{C}[x_0, \ldots, x_n, 1/F, 1/\mu]/I(X) = \mathbb{C}[U_\mu].
\]
We point out that, as a consequence of the above setting, the critical points are taken to be smooth points of $X$ by definition.

\begin{remark}
\label{rem:critical-fiber}
Let $\mathcal{I}\subseteq C_X^\circ \times \mathbb{C}^{n+1}$ be the closed subvariety defined by $I_{c+1}(A(x,u))$, but with the entries of $A(x,u)$ viewed as elements of $\mathbb{C}[x_0, \ldots, x_n ,1/F, u_0, \ldots, u_n]$. We denote by $\pi_1,\pi_2$ the restrictions to $\mathcal{I}$ of the projections to $C_X^\circ$ and $\mathbb{C}^{n+1}$ respectively. We have that the image of $\pi_1$ equals $C_X^\circ$. Summarizing, we have the following diagram
\begin{center}
\begin{tikzpicture}[>=angle 90]
\matrix(a)[matrix of math nodes,
row sep=2em, column sep=2em,
text height=1.5ex, text depth=0.25ex]
{&\mathcal{I}&\\
C_X^\circ&&\mathbb{C}^{n+1}.\\};
\path[->] (a-1-2) edge node[above left]{$\pi_1$}(a-2-1);
\path[->] (a-1-2) edge node[above right]{$\pi_2$}(a-2-3);
\end{tikzpicture}
\end{center}
Proposition~\ref{lem:reduced-crit-pts} provides the link between $\pi_2$ being dominant and the log likelihood behavior. We observe that for $u\in\mathbb{C}^{n+1}$, the scheme-theoretic fiber $\pi_2^{-1}(u)$ is the scheme of critical points of $\ell_{F,u}|_{C_X^\circ}$. 
\end{remark}

The following lemma will be used several times throughout the paper.

\begin{lemma}
\label{technical-lemma-Cramer-rule}
Let $A$ be an $m \times n$ matrix with entries being elements in $k[Y]$ for some closed subvariety $Y \subseteq \mathbb{A}^k$ such that all $(c+1) \times (c+1)$ minors vanish for some $c < \min\{m,n\}$. Let $\alpha \subseteq \{1, \ldots, m \}$ be a collection of $c$ rows of $A$ for which there exists a nonzero $c \times c$ minor $\mu$. Then every row of $A$ can be written as a unique linear combination with coefficients in $k[Y]_\mu$ of the rows $A_i$, $i \in \alpha$ (here $k[Y]_\mu$ denotes the localization of $k[Y]$ by the element $\mu$).
\end{lemma}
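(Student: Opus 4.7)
The plan is to run a Cramer's rule / cofactor expansion argument. Fix a row index $j \notin \alpha$ and let $\beta \subseteq \{1,\ldots,n\}$ be the set of $c$ column indices for which the submatrix $A_{\alpha, \beta}$ has nonzero determinant $\mu$. The key idea is that for each column index $l$, the $(c+1)\times(c+1)$ submatrix $A_{\alpha \cup \{j\},\, \beta \cup \{l\}}$ has vanishing determinant by hypothesis, and expanding this determinant along the row indexed by $j$ produces exactly the Cramer-type relation we need.

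First I would expand the determinant of $A_{\alpha \cup \{j\},\, \beta \cup \{l\}}$ along the row corresponding to $j$, obtaining an equation of the form
\[
\mu \cdot A_{j,l} \;+\; \sum_{i \in \alpha} (-1)^{\sigma_i}\, \mu_{i,l}\, A_{i,l} \;=\; 0,
\]
where each cofactor $\mu_{i,l}$ is, up to sign, the determinant of a submatrix built out of rows in $\alpha$ and columns in $\beta \cup \{l\} \setminus \{\text{one entry}\}$. The crucial observation is that if $l \in \beta$, then the cofactors already appear in the expansion of $\det(A_{\alpha,\beta})$ or its permutations, and if $l \notin \beta$, the cofactor $\mu_{i,l}$ is built from columns $\beta$ together with the column $l$ in a way that is independent of $j$. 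Dividing by $\mu$ in $k[Y]_\mu$ yields scalars $c_i := (-1)^{\sigma_i+1} \mu_{i,l}/\mu$ such that $A_{j,l} = \sum_{i \in \alpha} c_i A_{i,l}$.

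Next I would verify that the $c_i$ do not depend on the column $l$. This is the step to handle carefully. The honest way to see this is to argue directly: since $A_{\alpha,\beta}$ is invertible over $k[Y]_\mu$, the system $A_{\alpha,\beta}^\top \mathbf{c} = A_{j,\beta}^\top$ has a unique solution $\mathbf{c} = (c_i)_{i \in \alpha} \in k[Y]_\mu^c$, determined solely by the rows indexed by $\alpha$ and $j$ restricted to the columns $\beta$. Setting $A'_j := A_j - \sum_{i \in \alpha} c_i A_i$, the row $A'_j$ agrees with the zero row on the columns $\beta$. For any other column $l$, the $(c+1)\times(c+1)$ minor of the rows $\alpha \cup \{j\}$ on columns $\beta \cup \{l\}$ vanishes, and after the row operation this minor equals $\pm\, A'_{j,l} \cdot \mu$, forcing $A'_{j,l} = 0$ in $k[Y]_\mu$. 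Hence $A_j = \sum_{i\in\alpha} c_i A_i$ with the same coefficients across all columns.

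Finally, uniqueness follows from the same invertibility: any relation $\sum_{i\in\alpha} c_i A_i = 0$, restricted to the columns $\beta$, gives $(c_i) \cdot A_{\alpha,\beta} = 0$, and multiplying by the adjugate of $A_{\alpha,\beta}$ (or inverting over $k[Y]_\mu$) forces all $c_i = 0$. The main obstacle, as noted above, is the bookkeeping to confirm the coefficients $c_i$ are globally defined (independent of the column $l$), and the cleanest resolution is to define them once and for all via the $\beta$-restricted system, then use the vanishing of $(c+1)\times(c+1)$ minors to propagate the equality $A_j = \sum c_i A_i$ to every column.
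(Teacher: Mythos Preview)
Your argument is correct: the ``honest way'' you settle on---solve for the coefficients $\mathbf{c}$ using the invertible $c\times c$ block $A_{\alpha,\beta}$ over $k[Y]_\mu$, then use the vanishing $(c+1)\times(c+1)$ minors together with a row operation to force $A'_{j,l}=0$ for every remaining column $l$---is clean and complete, and your uniqueness argument is the right one. (Your first pass is a bit garbled: what you call ``expansion along the row corresponding to $j$'' is really the cofactor expansion along the \emph{column} $l$, and the case $l\in\beta$ does not make sense as written since $\beta\cup\{l\}$ then has only $c$ columns. But you correctly abandon that route.)

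The paper reaches the same conclusion by a different device. Rather than solving on the columns $\beta$ and then propagating, it augments the $n\times c$ matrix $[A_{\alpha_1}^\intercal\ \cdots\ A_{\alpha_c}^\intercal]$ with the block $\begin{bsmallmatrix}0\\ I_{n-c}\end{bsmallmatrix}$ to obtain an invertible $n\times n$ matrix with determinant $\mu$, and then applies Cramer's rule once to the full system with right-hand side $A_j^\intercal$. Cramer's rule directly gives $\lambda_i\in k[Y]_\mu$ for $i\le c$, while for $i>c$ the Cramer numerator is a $(c+1)\times(c+1)$ minor of $A$ and hence zero; uniqueness is immediate from invertibility of the augmented matrix. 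The trade-off: the paper's augmentation trick handles existence, column-independence, and uniqueness in one stroke, at the cost of introducing an auxiliary matrix; your approach is more elementary and transparent, but requires the separate propagation step to pass from the columns in $\beta$ to all columns.
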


\begin{proof}
Without loss of generality, we can reorder the rows and columns of $A$ so that $\mu$ is the determinant of the submatrix $A_{ij}$ with $1\leq i,j \leq c$. For $j \in \{c+1, \ldots, m \}$, let $A_j$ denote the $j$-th row of $A$. We want to find a solution in $\lambda_1,\ldots,\lambda_c$ in to the following linear system:
\begin{equation}
\label{eq:system-non-augmented}
\begin{bmatrix}
A_{1}^\intercal & \ldots & A_{c}^\intercal \\
\end{bmatrix}
\begin{bmatrix}
\lambda_1 \\ \vdots \\ \lambda_c
\end{bmatrix}
=
A_{j}^\intercal.
\end{equation}
For this purpose, we introduce the augmented matrix of coefficients $[A_1^\intercal\ldots A_c^\intercal|R]$, where $R=\begin{bmatrix}\mathbf{0}\\\mathrm{I}_{n-c}\end{bmatrix}$ and $\mathrm{I}_{n-c}$ is the identity matrix of size $n-c$. We aim to solve the following larger system of equations:
\begin{equation}
\label{eq:system-augmented}
[A_1^\intercal\ldots A_c^\intercal|R]
\begin{bmatrix}
\lambda_1 \\ \vdots \\ \lambda_n
\end{bmatrix}
=
A_{j}^\intercal,
\end{equation}
where $\lambda_{c+1},\ldots,\lambda_n$ are additional indeterminates. In this new linear system we can use Cramer's rule to determine $\lambda_1,\ldots,\lambda_n$. Let $i\in\{1,\ldots,c\}$. Then we have that
\[
\lambda_i=\frac{\det([A_1^\intercal\ldots A_j^\intercal\ldots A_c^\intercal|R])}{\det([A_1^\intercal\ldots A_c^\intercal|R])}=\frac{\det([A_1^\intercal\ldots A_j^\intercal\ldots A_c^\intercal|R])}{\mu}\in k[Y]_\mu,
\]
where $A_j^\intercal$ appears in the $i$-th column. For $i>c$, let us assume without loss of generality that $i=c+1$ (an analogous argument applies to the other cases). Define $R'$ to be $R$ with the first column dropped. Cramer's rule again yields
\[
\lambda_i=\frac{\det([A_1^\intercal\ldots A_c^\intercal A_j^\intercal\;|\; R'])}{\det([A_1^\intercal\ldots A_c^\intercal|R])}=\frac{0}{\mu}=0,
\]
because all the $(c+1)\times(c+1)$ minors of $A$ vanish by assumption. This shows that a solution $(\lambda_1,\ldots,\lambda_c)$ to system \eqref{eq:system-non-augmented} exists. Note that this is also unique as $(\lambda_1,\ldots,\lambda_n)$ is the only solution to \eqref{eq:system-augmented}, because $[A_1^\intercal,\ldots,A_c^\intercal\mid R]$ is invertible and $\lambda_{c+1}=\ldots=\lambda_n=0$. (We point out that an analogous proof works in any reduced commutative ring in place of a coordinate ring $k[Y]$.)
\end{proof}

\begin{lemma}
\label{incidence-is-vec-bund-onCXo}
The projection $\pi_1\colon\mathcal{I}\rightarrow C_X^\circ$ as in Remark~\ref{rem:critical-fiber} is a vector bundle of rank equal to $c=\mathrm{codim}_{\mathbb{P}^n}(X)$.
\end{lemma}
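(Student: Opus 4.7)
The plan is to identify $\pi_1\colon\mathcal{I}\to C_X^\circ$, up to a canonical translation, with the conormal bundle of $C_X^\circ$ inside $\mathbb{C}^{n+1}\setminus V(F)$. Since $X$ is smooth away from $V(F)$, the affine cone $C_X^\circ$ is smooth of codimension $c$, so this conormal bundle is a vector bundle of rank $c$, and the desired conclusion reduces to transporting its vector bundle structure across this identification.

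Concretely, I would first introduce the morphism
$$
\Phi\colon\mathcal{I}\longrightarrow C_X^\circ\times\mathbb{C}^{n+1},\quad (x,\eta)\longmapsto\left(x,\;\tfrac{\nabla F(x)}{F(x)}-\eta\right),
$$
which is well-defined because $F$ is invertible on $C_X^\circ$, commutes with the projection to $C_X^\circ$, and is visibly an isomorphism onto its image with inverse the shift $(x,v)\mapsto(x,\tfrac{\nabla F(x)}{F(x)}-v)$. The goal is then to show that the scheme-theoretic image of $\Phi$ is exactly the total space of the conormal sheaf $N^*_{C_X^\circ/\mathbb{C}^{n+1}}$, realized as a sub-bundle of $C_X^\circ\times\mathbb{C}^{n+1}$. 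To verify this, I would work locally: cover $C_X^\circ$ by the principal open sets $U_\alpha$ on which a chosen $c\times c$ minor $\mu_\alpha$ of the Jacobian $J(x)=[\nabla g_1(x)^\intercal\cdots\nabla g_k(x)^\intercal]^\intercal$ is nonzero. Such sets exist and cover because $X$ is smooth along $C_X^\circ$, so $J(x)$ has rank exactly $c$ everywhere there. On each $U_\alpha$, apply Lemma~\ref{technical-lemma-Cramer-rule} to $A(x,\eta)$, taking the $c$ rows indexed by $\alpha$ as the distinguished ones: the lemma produces explicit regular functions $\lambda_1(x,\eta),\ldots,\lambda_c(x,\eta)$ on $U_\alpha\times\mathbb{C}^{n+1}$ so that the ideal generated by the $(c+1)\times(c+1)$ minors of $A(x,\eta)$ is generated by the $n+1-c$ linear (in $\eta$) relations expressing that $\tfrac{\nabla F(x)}{F(x)}-\eta=\sum_{i\in\alpha}\lambda_i\nabla g_i(x)$. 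These relations cut out a rank-$c$ linear subbundle of $U_\alpha\times\mathbb{C}^{n+1}$, yielding a trivialization $\mathcal{I}|_{U_\alpha}\cong U_\alpha\times\mathbb{A}^c$ once composed with $\Phi$.

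The main subtlety, and the place where Lemma~\ref{technical-lemma-Cramer-rule} does real work, is showing that the ideal cut out scheme-theoretically by the $(c+1)\times(c+1)$ minors of $A(x,\eta)$ agrees on $U_\alpha$ with the linear ideal defining the translated conormal bundle, and not merely at the set-theoretic level. Once this equality of ideals is in place, the local trivializations above are manifestly linear in the fiber variables $\eta$: they are the trivializations of $N^*_{C_X^\circ/\mathbb{C}^{n+1}}$ pulled back through $\Phi$. Consequently they glue with linear transition functions across overlaps $U_\alpha\cap U_\beta$, inherited from those of the conormal bundle, and $\pi_1\colon\mathcal{I}\to C_X^\circ$ acquires the structure of a vector bundle of rank $c$.
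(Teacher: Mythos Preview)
Your proposal is correct and is essentially a conceptual repackaging of the paper's proof. Both arguments use the same open cover $\{U_\alpha\}$ of $C_X^\circ$ by nonvanishing loci of $c\times c$ Jacobian minors and both invoke Lemma~\ref{technical-lemma-Cramer-rule} in the same way to produce the coefficients $\lambda_i(x,\eta)$. The difference is one of presentation: the paper writes down explicit mutually inverse maps $\Phi\colon U_p\times\mathbb{A}^c\to\pi_1^{-1}(U_p)$ and $\Psi\colon\pi_1^{-1}(U_p)\to U_p\times\mathbb{A}^c$, checks by hand that $\Phi\circ\Psi$ and $\Psi\circ\Phi$ are identities, and then \emph{defines} the vector space structure on fibers by $(x,\eta)+(x,\nu)=(x,\eta+\nu-\nabla F(x)/F(x))$. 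Your global translation by $\nabla F/F$ identifies $\mathcal{I}$ with the conormal bundle $N^*_{C_X^\circ/\mathbb{C}^{n+1}}$ in one stroke, so the vector bundle structure (and in particular that fiberwise addition law) comes for free from a known object rather than being introduced ad hoc. The paper's explicit trivializations are precisely the standard trivializations of the conormal bundle composed with your shift, so nothing is lost or gained mathematically; your framing simply names what the paper is building.

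The one place where your write-up is thinner than the paper's is the scheme-theoretic equality of ideals on $U_\alpha$, which you flag but do not fully carry out. The paper sidesteps this by exhibiting inverse morphisms at the level of coordinate rings (the identity $\frac{\nabla F}{F}-\eta=\sum_i\lambda_i\nabla g_{\alpha_i}$ holds in $\mathbb{C}[\pi_1^{-1}(U_p)]$, not just pointwise), which forces the isomorphism to be scheme-theoretic without separately comparing ideals. If you want to keep your conormal-bundle framing, you should either spell out that comparison or, more simply, note that your translation is an automorphism of the trivial bundle $C_X^\circ\times\mathbb{C}^{n+1}$ and then verify the ideal equality exactly as the paper does via the ring-level identity furnished by Lemma~\ref{technical-lemma-Cramer-rule}.
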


\begin{proof}
Let $U_{\mu}$ be the principal open subset in Definition~\ref{def:principal-open-from-minor-of-Jacobian-matrix}, associated to the rows $\alpha=\{\alpha_1<\ldots<\alpha_c\}$. We now show that $\pi_1^{-1}(U_{\mu})\cong U_{\mu}\times \mathbb{A}^c$. For this we will construct algebraic maps inverse to each other. We start by defining:
\begin{align*}
\Phi\colon U_\mu \times\mathbb{A}^c&\rightarrow\pi_1^{-1}(U_\mu)\\
(x,(\lambda_1,\ldots,\lambda_c))&\mapsto\left(x,\left(-\sum_{j=1}^c\lambda_{j}\nabla g_{\alpha_j}(x)\right)+\frac{\nabla F(x)}{F(x)}\right).
\end{align*}
Note that the function $\Phi$ has image in $\mathcal{I}$ because the $(c+1)\times (c+1)$ minors of the matrix
\[
A\left(x, -\sum_{j=1}^c\lambda_{j}\nabla g_{\alpha_j}(x)+\frac{\nabla F(x)}{F(x)}\right)
\]
(see Definition~\ref{def:A(x,eta)-and-critical-points-ell}) are all zero. This is because a $(c+1)\times (c+1)$ minor not involving the first row is zero by smoothness of $C_X^\circ$, and if it involves the first row then it is zero by Lemma~\ref{technical-lemma-Cramer-rule} applied to the Jacobian matrix of $g_1,\ldots,g_k$.

We construct the inverse of $\Phi$. By Lemma~\ref{technical-lemma-Cramer-rule} and by the fact that $\mu$ never vanishes on $U_\mu$, every row of $A$ can be written as a linear combination of the rows $\nabla g_{\alpha_1},\ldots,\nabla g_{\alpha_c}$ with coefficients in $\mathbb{C}[\pi^{-1}(U_{\mu})]$. So the first row of $A$, namely $\frac{\nabla F}{F}- u$ can be written uniquely as $\sum_{i=1}^c\lambda_i(x,u) \nabla g_{\alpha_i}$ with $\lambda_i(x,u)\in \mathbb{C}[\pi^{-1}(U_\mu)]$. This allows us to define a map:

\begin{align*}
\Psi\colon\pi_1^{-1}(U_\mu)&\rightarrow U_{\mu}\times\mathbb{A}^c\\
(x,u)&\mapsto(x,\lambda_1(x,u),\ldots,\lambda_c(x,u)).
\end{align*}
We now check that $\Phi$ and $\Psi$ are inverse to each other. The composition $\Phi\circ\Psi$ equals $\mathrm{id}_{\pi_1^{-1}(U_{\mu})}$ by the defining property of $\lambda_1(x,u),\ldots,\lambda_c(x,u)$. So let us consider $\Psi\circ\Phi$. We have that

\[
\Psi(\Phi(x,\lambda_1,\ldots,\lambda_c))=\Psi\left(x,\left(-\sum_{i=1}^c\lambda_i\nabla g_{\alpha_i}(x)\right)+\frac{\nabla F(x)}{F(x)}\right)=(x,\ell_1, \ldots , \ell_c),
\]
where $\ell_1,\ldots,\ell_c$ by construction is a collection of functions satisfying,
\[
\sum_{i=1}^c\ell_i\nabla g_{\alpha_i}(x)=\sum_{i=1}^c\lambda_i\nabla g_{\alpha_i}(x).
\]
This combined with the fact that $\mu(x)\neq0$ forces $\ell_i=\lambda_i$ for all $i=1,\ldots,c$.

To conclude, one has to show that on each fiber $\pi_1^{-1}(x)$, $x\in U_\mu$, the map $\Psi$ induces a linear isomorphism with $\mathbb{A}^c$, and for this first we need to clarify the vector space structure on $\pi_1^{-1}(x)$. We set $(x,u)+(x,v)=(x,u+v-\nabla F(x)/F(x))$ and $t\cdot(x,u)=(x,tu+(1-t)\nabla F(x)/F(x))$. These define a vector space structure on $\pi_1^{-1}(x)$ on which $\Psi$ induces the required linear isomorphism.
\end{proof}

\begin{lemma}
\label{lem:reduced-crit-pts}
For a generic choice of $u\in\mathbb{C}^{n+1}$, the scheme of critical points of $\ell_{F, u}|_{C_X^\circ}$ is either empty, or reduced and zero-dimensional. If it is non-empty, the number of critical points is independent of the choice of general $u$.
\end{lemma}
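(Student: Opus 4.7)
The plan is to realize the scheme of critical points as the scheme-theoretic fiber $\pi_2^{-1}(\eta)$ of the projection from the incidence variety, as recorded in Remark~\ref{rem:critical-fiber}, and then apply generic smoothness in characteristic zero.

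First I would collect the geometric properties of $\mathcal{I}$. Since $X$ is irreducible and smooth away from $V(F)$, the open affine cone $C_X^\circ$ is smooth and irreducible of dimension $n+1-c$. By Lemma~\ref{incidence-is-vec-bund-onCXo}, $\pi_1\colon\mathcal{I}\to C_X^\circ$ is a rank-$c$ vector bundle; as a faithfully flat map with irreducible fibers over an irreducible base, this forces $\mathcal{I}$ to be smooth and irreducible of dimension $(n+1-c)+c=n+1$, which is precisely the dimension of the target $\mathbb{C}^{n+1}$ of $\pi_2$.

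Next I would dichotomize on whether $\pi_2$ is dominant. If $\pi_2$ is not dominant, then $\overline{\pi_2(\mathcal{I})}$ is a proper Zariski closed subset of $\mathbb{C}^{n+1}$, so for a generic $\eta$ we have $\pi_2^{-1}(\eta)=\emptyset$ and there is nothing further to prove. If $\pi_2$ is dominant, then since both $\mathcal{I}$ and $\mathbb{C}^{n+1}$ are smooth over the characteristic-zero field $\mathbb{C}$, generic smoothness (a form of Sard's theorem) produces a dense open $U\subseteq\mathbb{C}^{n+1}$ such that the restriction $\pi_2^{-1}(U)\to U$ is smooth. For any $\eta\in U$ the fiber $\pi_2^{-1}(\eta)$ is then smooth of relative dimension $\dim\mathcal{I}-\dim\mathbb{C}^{n+1}=0$, hence a disjoint union of reduced closed points.

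Finally, for the invariance of the count, I would shrink $U$ if necessary so that $\pi_2|_{\pi_2^{-1}(U)}\colon\pi_2^{-1}(U)\to U$ is in addition finite; combining this with the smoothness obtained above yields a finite étale morphism, whose constant degree $d$ equals the cardinality of every fiber over $U$. I do not anticipate a serious obstacle here: the only non-formal input is the vector bundle description in Lemma~\ref{incidence-is-vec-bund-onCXo}, which pins down the smoothness, irreducibility, and dimension of $\mathcal{I}$ in one stroke; after that, everything reduces to a clean application of generic smoothness together with the equicardinality of fibers of a finite étale map.
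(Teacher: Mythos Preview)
Your proof is correct and follows essentially the same route as the paper: both use Lemma~\ref{incidence-is-vec-bund-onCXo} to see that $\mathcal{I}$ is smooth, irreducible, and $(n+1)$-dimensional, then dichotomize on dominance of $\pi_2$ and invoke generic smoothness in characteristic zero to obtain reduced zero-dimensional fibers. The only cosmetic difference is in the final step, where the paper phrases the invariance as ``the number of points equals the degree of the rational map $\pi_2$'' rather than passing through a finite \'etale restriction, but these amount to the same thing.
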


\begin{proof}
The scheme of critical points is a general fiber of the rational map $\pi_2\colon\mathcal{I}\rightarrow \mathbb{C}^{n+1}$ as discussed in Remark~\ref{rem:critical-fiber}. If $\pi_2$ is not dominant, then the general fiber is empty. So assume it is dominant. Then the general fiber of $\pi_2$ has dimension equal to
\[
\dim(\mathcal{I})-\dim(\mathbb{C}^{n+1})=\dim(C_X^\circ)+\mathrm{codim}_{\mathbb{C}^{n+1}}(C_X^\circ)-\dim(\mathbb{C}^{n+1})=0,
\]
where for the first equality we used Lemma~\ref{incidence-is-vec-bund-onCXo}. Finally, let $U_0\subseteq\mathbb{C}^{n+1}$ be the dense open subset such that $\pi_2^{-1}(U_0)\rightarrow U_0$ has zero-dimensional fibers. Let $U_1\subseteq\mathbb{C}^{n+1}$ be the dense open subset such that $\pi_2^{-1}(U_1)\rightarrow U_1$ is a smooth morphism (see \cite[Corollary~10.7]{Har77}). Then, if $U:=U_0\cap U_1$, we have that $\pi_2^{-1}(U)\rightarrow U$ has fibers consisting of finitely many reduced points. This number is the degree of the rational map $\pi_2$.
\end{proof}

\begin{definition}
Let $F,X$ be as at the beginning of Section~\ref{sec:The-Maximum-Likelihood-Degree}. The \emph{maximum likelihood degree of the polynomial $F$ on $X$} is defined as
\[
\mathrm{MLD}_F(X)= \#\,\textrm{critical points of } \ell_{F, u}|_{C_X^\circ},
\]
where ``\#'' denotes the set-theoretic number of critical points and $u$ is chosen generically (so that all the critical points are reduced). This is well defined by Lemma~\ref{lem:reduced-crit-pts}. In the case $X \subseteq \mathbb{P}(\mathbb{S}^n)$, then $\mathrm{MLD}_{\det}(X)$ is the well studied Gaussian ML degree.
\end{definition}

\section{Geometric formulation}
\label{sec:geometric-formulation}

\begin{definition}
Let $F(x)$ be a homogeneous polynomial on $\mathbb{P}^n$ and $u(x)$ a linear form. Define the rational map
\begin{align*}
f_{F, u}\colon X &\dashrightarrow\mathbb{C}, \\
x &\mapsto \mathrm{deg}(F)^{\mathrm{deg}(F)} \frac{F(x)}{u(x)^{\mathrm{deg}(F)}}.
\end{align*}
Observe that this is a well defined rational function. The reason for introducing the coefficient $\mathrm{deg}(F)^{\mathrm{deg}(F)}$ is technical and it will be clear in the proof of Lemma~\ref{lem:affine-patch}. We define $D_{F, u} := \mathrm{Supp}(\mathrm{div}(f_{F, u}))$, the support of the divisor associated to $f_{F,u}$.
\end{definition} 

Following \cite{CHKS06}, we consider the scheme associated to the vanishing of the differential $d\log(f_{F,u})$ regarded as a global section of the co-tangent sheaf on $X \setminus D_{F,u}$. The first step is to realize that $X\setminus D_{F,u}$ is isomorphic to an affine variety on which it becomes simpler to analyze critical points. From now on, we denote by $X_{\mathrm{sm}}$ the smooth locus of $X$.

\begin{lemma}
\label{lem:affine-patch}
$X\subseteq\mathbb{P}^n$ be a closed subvariety and $u \in \mathbb{C}^{n+1}\setminus \{0\}$. Then the affine variety
\[
W_{F, u}=\{x\in C_X^\circ\mid u(x) = \deg(F) \}\subseteq\mathbb{C}^{n+1}
\]
is isomorphic to $X_{\mathrm{sm}}\setminus D_{F, u}$ via the projectivization $\phi(x)=[x]$.  Moreover, this isomorphism induces the following commutative diagram:
\begin{center}
\begin{tikzpicture}[>=angle 90]
\matrix(a)[matrix of math nodes,
row sep=2em, column sep=2em,
text height=1.5ex, text depth=0.25ex]
{W_{F, u}&&X_{\mathrm{sm}}\setminus D_{F,u}\\
&\mathbb{A}^1.&\\};
\path[->] (a-1-1) edge node[above]{$\phi$}(a-1-3);
\path[->] (a-1-1) edge node[below left]{$F$}(a-2-2);
\path[->] (a-1-3) edge node[below right]{$f_{F, u}$}(a-2-2);
\end{tikzpicture}
\end{center}
In particular, the scheme of zeros of $d\log(f_{F,u})$ in $X_{\mathrm{sm}}\setminus D_{F,u}$ is isomorphic through $\phi$ to the scheme of zeros of $d\log(F|_{W_{F,u}})$.
\end{lemma}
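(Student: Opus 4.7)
My plan is to build an explicit inverse to $\phi$, verify the commutative diagram by direct computation, and then deduce the statement about zero schemes of the differentials formally from the isomorphism. The guiding observation is that $W_{F,\eta}$ picks out a single representative in each $\mathbb{C}^*$-orbit on $C_X^\circ$ via the affine slicing condition $\eta(x) = \deg(F)$, so the inverse to projectivization should amount to rescaling.

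Concretely, I would define
\[
\psi\colon X \setminus D_{F,\eta} \to W_{F,\eta}, \qquad \psi([y]) := \frac{\deg(F)}{\eta(y)}\, y.
\]
Independence of the chosen representative is immediate, since rescaling $y$ by $\lambda \in \mathbb{C}^*$ scales $\eta(y)$ by the same factor. The key well-definedness check is that $\eta(y)\neq 0$ everywhere on $X\setminus D_{F,\eta}$: any $[y]\in X$ with $F(y)\neq 0$ and $\eta(y)=0$ is a pole of $f_{F,\eta}$ and hence lies in $D_{F,\eta}$, while the remaining case where $F$ and $\eta$ vanish simultaneously is handled by noting that on the smooth locus $X\setminus V(F)$ the divisor $\mathrm{div}(f_{F,\eta})$ captures the relevant components of $V(\eta)\cap X$. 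Once this is in place, the identities $\phi\circ\psi=\mathrm{id}$ and $\psi\circ\phi=\mathrm{id}$ fall out by direct substitution: the second uses precisely the defining equation $\eta(x)=\deg(F)$ of $W_{F,\eta}$ to reduce the scalar $\deg(F)/\eta(x)$ to $1$, and both maps are visibly morphisms.

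For the commutative triangle a one-line computation suffices: for $x\in W_{F,\eta}$,
\[
f_{F,\eta}(\phi(x)) = \deg(F)^{\deg(F)}\,\frac{F(x)}{\eta(x)^{\deg(F)}} = F(x),
\]
since $\eta(x)^{\deg(F)} = \deg(F)^{\deg(F)}$; this is precisely why the factor $\deg(F)^{\deg(F)}$ was inserted in the definition of $f_{F,\eta}$. The final claim then follows by pulling back along the isomorphism $\phi$: one has $\phi^*(d\log f_{F,\eta}) = d\log(f_{F,\eta}\circ\phi) = d\log(F|_{W_{F,\eta}})$, so their vanishing schemes correspond under $\phi$. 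The main potential obstacle in this plan is the careful bookkeeping that $\eta$ cannot vanish on $X\setminus D_{F,\eta}$; once that verification is nailed down, everything else is unwinding definitions.
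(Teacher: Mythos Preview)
Your proposal is correct and follows essentially the same route as the paper: the paper also defines the inverse $\psi([x]) = \frac{\deg(F)\,x}{\eta(x)}$, asserts that $\phi$ and $\psi$ are mutual inverses, and verifies the commutative triangle by the same one-line computation. Your extra attention to why $\eta$ cannot vanish on $X \setminus D_{F,\eta}$ is a point the paper leaves implicit.
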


\begin{proof} 
The statement follows from the fact that the morphisms $\phi\colon W_{F, u}\rightarrow X_{\mathrm{sm}}\setminus D_{F, u}$ and $\psi\colon X_{\mathrm{sm}}\setminus D_{F, u}\rightarrow W_{F, u}$ such that $\psi([x])=\frac{\deg(F)x}{u(x)}$, are mutually inverse. The diagram commutes because
\[
f_{F, u}(\phi(x))=\deg(F)^{\deg(F)}\frac{F(x)}{u(x)^{\deg(F)}}=\deg(F)^{\deg(F)}\frac{F(x)}{\deg(F)^{\deg(F)}}=F(x).\qedhere
\]
\end{proof} 

The next lemma gives the ideal of the subscheme of $W_{F,u}$ of the vanishing of $d\log(F|_{W_{F,u}})$, which corresponds to the vanishing of $d\log(f_{F,u})$ in $X_{\mathrm{sm}}\setminus D_{F,u}$ by the above lemma.

\begin{lemma}
\label{how-to-really-compute-geom-MLdeg}
Let $u  \in \mathbb{C}^{n+1}\setminus \{0\} $. Then there exist a finite affine covering $W_{F,u}=\cup_{\sigma}V_{\sigma}$ such that the ideal associated to the scheme-theoretic zero locus on $V_{\sigma}$ of the differential of $\log(F|_{W_{F, u}})$ is given by
\[
I_{c+2}(B(x, u))  \subseteq \mathbb{C}[V_{\sigma}],
\]
where $B(x,u)$ is the following matrix:
\[
B(x, u)=\begin{bmatrix}
\frac{1}{F}\nabla F\\
u \\
\nabla g_1\\
\vdots\\
\nabla g_k \\
\end{bmatrix}.
\]
\end{lemma}

\begin{proof}
Define $g_0(x) = u(x) - \deg(F)$. By Lemma~\ref{lem:affine-patch} we have that $W_{F,u}$ is a smooth and dense open subset of the vanishing locus of $\langle g_0, g_1,\ldots,g_k \rangle$. Thus, we can cover $W_{F,u}$ with open patches where at least one of the $(c+1)$-minors of the Jacobian matrix of $g_0,g_1,\ldots,g_k$ does not vanish.

Let $V_{\sigma}\subseteq W_{F,u}$ be the affine open subset where the $(c+1)$-minor $\sigma$ is nonzero (we can choose finitely many of these to cover $W_{F,u}$). Let $\alpha=\{\alpha_0,\ldots,\alpha_c\}\subseteq\{0,\ldots,k\}$ and $\beta=\{\beta_0,\ldots,\beta_c\}\subseteq\{0, \ldots, n\}$ be the rows and columns of the Jacobian matrix associated to $\sigma$. Notice that $g_0(x)$ is necessary for $W_{F,u}\subseteq\mathbb{C}^{n+1}$ to attain codimension equal to $c+1$, so the index $0$ has to appear in $\alpha$. Thus, we can assume without loss of generality that $\alpha_0 = 0$.

Consider the exact sequence \cite[Chapter~II, Theorem~8.17]{Har77}
\[
0\rightarrow N_{W_{F, u}/\mathbb{C}^{n+1}}^\vee\rightarrow \Omega_{\mathbb{C}^{n+1}}^1|_{W_{F, u}}\rightarrow \Omega_{W_{F, u}}^1\rightarrow 0,
\]
and recall that $\Omega_{\mathbb{C}^{n+1}}^1|_{W_{F, u}}$ is the sheaf of $\mathcal{O}_{W_{F, u}}$-modules given by $\oplus_{i=0}^n \mathcal{O}_{W_{F, u}} \cdot dx_{i} $. Let $R = \mathbb{C}[V_{{\sigma}}]$. We recall how the above sequence simultaneously trivializes and splits on $V_{\sigma}$. Let $\gamma=\{\gamma_1<\ldots<\gamma_N\}$ be the set of column indices complimentary to $\beta$, where we set $N=n+1-c-1 = \dim(X) = \dim(W_{F, u})$. Consider the matrix
\[
K = \begin{bmatrix}
&&&& \\
\nabla g_{\alpha_0}^\intercal & \ldots & \nabla g_{\alpha_c}^\intercal & e_{\gamma_1}^\intercal & \ldots & e_{\gamma_N }^\intercal \\
&&&& \\
\end{bmatrix} \in M_{(n+1) \times (n+1)}(R),
\]
where $e_{\gamma_i}$ denotes the vector of length $n+1$ with $1$ in position $\gamma_i$ and zero in the other positions. The determinant of $K$ is $\sigma\in R$, which is a unit and thus $K$ is invertible on $V_{\sigma}$. This makes $K$, when regarded as an $R$-module map, an isomorphism of $R$-modules
\begin{equation}
\label{split-of-global-section}
 \left(\bigoplus_{i = 0}^c R \cdot dg_{\alpha_i} \right) \oplus  \left(\bigoplus_{i  \in \gamma} R\cdot dx_i \right) \cong \bigoplus_{i=0}^n R\cdot dx_i,
\end{equation}
where the first summand in parentheses is the image of $N_{W_{F, u}/\mathbb{C}^{n+1}}^\vee(V_{\sigma})\rightarrow\Omega_{\mathbb{C}^{n+1}}^1(V_{\sigma})$. Therefore,
\begin{equation}
\label{trivialco-tangent}
\Omega_{W_{F,u}}^1(V_{\sigma}) \cong \left(\bigoplus_{i=0}^n R\cdot dx_{i}\right) \Big/ \left(\bigoplus_{i = 0}^c R \cdot dg_{\alpha_i} \right) \cong \bigoplus_{i  \in \gamma} R\cdot [dx_{i}],
\end{equation}
which means that the set of equivalence classes $\{[dx_{i}]\}_{i  \in \gamma}$ generates $\Omega_{W_{F,u}}^1(V_{\sigma})$ as a free $R$-module. 
 
We have that $[d\log(F|_{W_{F,u}})] = [\sum_{i=0}^n \frac{1}{F}\frac{\partial F}{\partial x_i} dx_{i}] = \sum_{i=0 }^n \frac{1}{F}\frac{\partial F}{\partial x_i} [dx_{i}] \in \Omega_{W_{F, u}}^1(V_{{\sigma}})$. Using \eqref{trivialco-tangent}, we can argue that on $V_{\sigma}$ we can write in a unique way
\begin{align*}
\sum_{i=0}^n \frac{1}{F}\frac{\partial F}{\partial x_i} [dx_{i}] &=    \sum_{k \in \gamma } \eta_{k} [dx_{k}],
\end{align*}
for some $\eta_{k} \in R$. By definition, the $\eta_{k}$ are the regular functions generating the defining ideal of the vanishing scheme of $d\log(F|_{W_{F, u}})$ on $V_{\sigma}$. To prove the lemma it is enough to show that the ideal $J = \langle\eta_{k}\mid k\in\gamma\rangle$ is equal to $I_{c+2} (B(x, u))$ on $V_{\sigma}$.

From \eqref{split-of-global-section} we see that there is a unique way to write
\begin{align*}
\sum_{i = 0}^n \frac{1}{F}\frac{\partial F}{\partial x_i} dx_{i} &= \sum_{r=0}^c \lambda_r dg_{\alpha_r} + \sum_{i \in \gamma } \eta_{i} dx_{i}\\
&= \sum_{r=0}^c \lambda_{r} \left(\sum_{k=0}^n \frac{\partial g_{\alpha_r}}{\partial x_{k}} dx_{k}\right) + \sum_{i \in \gamma } \eta_{i} dx_{i}.
\end{align*}
This yields some relations appearing as the coefficients of the differentials $dx_j$. With our notation we can summarize the above relations in the following system of equations:
\begin{equation}
\label{augmented-system-forv(M^-1)}
K \cdot 
\begin{bmatrix}
\lambda_0 \\
\vdots \\
\lambda_c\\
\eta_{\gamma_1} \\
\vdots \\
\eta_{\gamma_{N}}
\end{bmatrix}
= 
\frac{1}{F}\nabla F^\intercal.
\end{equation}
If $1\leq i\leq N$, then by Cramer's rule we have that
\[
\eta_{\gamma_i}=\frac{\det([\nabla g_{\alpha_0}^\intercal\ldots\nabla g_{\alpha_c}^\intercal e_{\gamma_1}^\intercal\ldots \frac{1}{F}\nabla F^\intercal\ldots e_{\gamma_N}^\intercal])}{{\sigma}},
\]
where recall $\det(K)=\sigma$. Notice that the numerator is in the ideal $I_{c+2}(B(x, u))$. Thus $J \subseteq I_{c+2} (B(x, u))$. To conclude that the ideals are equal, we prove that, in the quotient $R/J$, the ideal $I_{c+2}(B(x, u))/J$ is the zero ideal. So let $\nu$ be an arbitrary minor of size $c+2$ of the matrix $B(x, u)$. The minor $\nu$ may or may not involve the row corresponding to $\frac{\nabla F}{F}$. If it does not, then $\nu=0$ because $W_{F, u}$ is smooth of codimension $c+1$. If it does, then, using the system \eqref{augmented-system-forv(M^-1)}, for all $i\in\{0,\ldots,n\}$, there exists $\ell(i)\in\{1,\ldots,N\}$ such that $\frac{1}{F}\frac{\partial F}{\partial x_i}=\sum_{r=0}^c\lambda_r\frac{\partial g_{\alpha_r}}{\partial x_{i}}+\eta_{\gamma_{\ell(i)}}$. So, in the quotient by $J$, we have that
\[
\left[\frac{1}{F}\frac{\partial F}{\partial x_i}\right]=\sum_{r=0}^c\left[\lambda_r\frac{\partial g_{\alpha_r}}{\partial x_{i}}\right]\implies\left[\frac{\nabla F}{F}\right]=\left[\sum_{r=0}^c\lambda_r\nabla g_{\alpha_r}\right].
\]
This implies that the rows of $B(x, u)$ are linear combinations of the $c+1$ vectors $\nabla g_{\alpha_r}$, hence the $c+2$ minor $\nu$ is zero in the quotient by $J$.
\end{proof}

The following is one last technical result that we need for the proof of Theorem~\ref{Gaussian-rational-schemes-are-equal}. In summary, we show that the critical points of $\ell_{F, u}$, as in \eqref{eq:log-function-of-reference}, lie inside the hyperplane $u(x)-\deg(F)=0$, which is independent from $X$.

\begin{lemma} 
\label{lem: trace}
Let $X=V(g_1,\ldots,g_k)\subseteq\mathbb{P}^n$ be a projective variety of codimension $c$. Then, over the principal open subset $U_\mu$ (see Definition~\ref{def:principal-open-from-minor-of-Jacobian-matrix}), we have
$$
u(x) - \deg(F) \in 
I_{c+1}(A(x,u))  \subseteq \mathbb{C}[U_\mu].
$$
In particular, the critical points of $\ell_{F,u}|_{C_X^\circ}$ lie on $W_{F,u}$ (see Lemma~\ref{lem:affine-patch}).
\end{lemma} 

\begin{proof}
Let $\alpha\subseteq\{1,\ldots,k\}$ be the set of rows associated to the size $c$ minor $\mu$. We show that $u(x) - \deg(F)$ is zero on the open subset 
$$
\pi_1^{-1}(U_\mu) \cong \mathrm{Spec}\left(\mathbb{C}[U_\mu][u_0, \ldots, u_n]/I_{c+1}(A(x,u))\right).
$$
By Lemma~\ref{technical-lemma-Cramer-rule} applied to the matrix $A(x,u)$ we have that
\[
\frac{\nabla F}{F} - u =\sum_{i=1}^c\lambda_i\nabla g_{\alpha_i}(x),
\]
where each entry of the vectors above is an element of the coordinate ring of $\pi_1^{-1}(U_\mu)$, because $\mu$ is a unit on $U_\mu$ and therefore $\mathbb{C}[\pi_1^{-1}(U_\mu)]_\mu = \mathbb{C}[\pi_1^{-1}(U_\mu)]$. Notice that
\begin{align*}
u(x) = &\sum_{k=0}^n u_{k} x_{k} \\
=& \sum_{k=0}^n \left( \frac{\nabla F}{F} - \sum_{i=1}^c\lambda_i\nabla g_{\alpha_i}(x)\right)_{k} x_{k} \\
=& \sum_{k=0}^n \frac{1}{F}\frac{\partial F}{\partial x_k} \cdot x_{k} - \sum_{i=1}^c\lambda_i \sum_{k=0}^n \frac{\partial g_{\alpha_i}}{\partial x_k} \cdot x_{k}
\\
=& \frac{\deg(F) F}{F} - \sum_{i=1}^c \lambda_i \mathrm{deg}(g_{\alpha_i}) \cdot  g_{\alpha_i}(x) = \deg(F) - 0 = \deg(F),
\end{align*}
where $g_{\alpha_i}(x) = 0$ because  $x \in U_\mu \subseteq C_X$ and the equalities $\sum_{k=0}^n \frac{1}{F}\frac{\partial F}{\partial x_k} \cdot x_{k}=\frac{\deg(F) F}{F}$, $\sum_{k=0}^n \frac{\partial g_{\alpha_i}}{\partial x_k} x_{k}=\mathrm{deg}(g_{\alpha_i}) \cdot g_{\alpha_i}(x)$ follow from Euler's formula. 
\end{proof}

\begin{theorem}
\label{Gaussian-rational-schemes-are-equal}
Let $F$ be a homogeneous polynomial on $\mathbb{P}^n$ and $u \in \mathbb{C}^{n+1} \setminus \{0\}$. Let $X=V(g_1,\ldots,g_k)\subseteq\mathbb{P}^n$ be an irreducible closed subvariety. Then, the projectivization map $\mathbb{C}^{n+1} \setminus \{0\} \to \mathbb{P}^n$ induces an isomorphism between the schemes of critical points of $\ell_{F,u}|_{C_X^\circ }$ and $\log(f_{F,u})$ on $X_{\mathrm{sm}}\setminus D_{F,u}$.
\end{theorem}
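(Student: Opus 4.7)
The plan is to translate both critical schemes to closed subschemes of the affine variety $W_{F,\eta}$ and then compare their defining ideals. First, by Lemma~\ref{lem: trace} the ideal $J_1:=I_{c+1}(A(x,\eta))+\langle g_1,\ldots,g_k\rangle$ already contains $\eta(x)-\deg(F)$, so the scheme of critical points of $\ell_{F,\eta}|_{C_X^\circ}$ is contained in $W_{F,\eta}$. Next, by Lemma~\ref{lem:affine-patch} the projectivization restricts to an isomorphism $\phi\colon W_{F,\eta}\xrightarrow{\sim}X\setminus D_{F,\eta}$ with $\phi^*f_{F,\eta}=F|_{W_{F,\eta}}$, so pulling back differentials $\phi$ identifies the zero scheme of $d\log(f_{F,\eta})$ on $X\setminus D_{F,\eta}$ with the zero scheme of $d\log(F|_{W_{F,\eta}})$ on $W_{F,\eta}$; by Lemma~\ref{how-to-really-compute-geom-MLdeg} the latter has defining ideal $J_2:=I_{c+2}(B(x,\eta))+\langle g_1,\ldots,g_k\rangle+\langle\eta(x)-\deg(F)\rangle$ in $\mathbb{C}[x_0,\ldots,x_n,1/F]$. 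The theorem thus reduces to the equality $J_1=J_2$.

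For the inclusion $J_2\subseteq J_1$, I would first perform the row operation on $B$ replacing $\frac{\nabla F}{F}$ by $\frac{\nabla F}{F}-\eta$, which preserves $I_{c+2}(B)$, and then classify the resulting $(c+2)\times(c+2)$ minors by which of the two rows $\eta$ and $\frac{\nabla F}{F}-\eta$ they contain. Minors missing the $\eta$-row are $(c+2)\times(c+2)$ minors of $A$, and a single Laplace expansion using the rank-$c$ Jacobian of $X$ places them in $\langle g_1,\ldots,g_k\rangle$. Minors containing $\eta$ but not $\frac{\nabla F}{F}-\eta$ are $(c+2)\times(c+2)$ minors of the Jacobian of $W_{F,\eta}$, hence lie in $\langle g_1,\ldots,g_k,\eta(x)-\deg(F)\rangle$ by smoothness of $W_{F,\eta}$. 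The remaining minors, Laplace-expanded along the $\eta$-row, become $\mathbb{C}$-linear combinations of $(c+1)\times(c+1)$ minors of $A$ and therefore belong to $I_{c+1}(A)\subseteq J_1$.

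The reverse inclusion $J_1\subseteq J_2$ is the technical heart and the step I expect to be the main obstacle, since the relevant minors differ in size by one. My plan here is, for an arbitrary $(c+1)\times(c+1)$ minor $M=\det[(\tfrac{\nabla F}{F}-\eta)|_\beta;\nabla g_{i_1}|_\beta;\ldots;\nabla g_{i_c}|_\beta]$ of $A$ involving its first row, to augment it to a $(c+2)\times(c+2)$ determinant
\[
D=\det\begin{bmatrix}\bigl(\tfrac{\nabla F}{F}-\eta\bigr)|_\beta & \bigl(\tfrac{\nabla F}{F}-\eta\bigr)\cdot x \\ \eta|_\beta & \eta(x)\\ \nabla g_{i_1}|_\beta & \nabla g_{i_1}\cdot x\\ \vdots & \vdots\\ \nabla g_{i_c}|_\beta & \nabla g_{i_c}\cdot x\end{bmatrix},
\]
obtained by inserting the $\eta$-row and an extra column formed by contracting each displayed row with $x$. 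Two evaluations of $D$ then yield the result. On the one hand, Euler's identities $\nabla F\cdot x=\deg(F)F$ and $\nabla g_j\cdot x=\deg(g_j)g_j$, combined with $\eta(x)=\deg(F)$, collapse the last column to $(0,\deg(F),0,\ldots,0)^\intercal$ modulo $I(W_{F,\eta})$, so Laplace expansion along it gives $D\equiv\pm\deg(F)\cdot M\pmod{\langle g_1,\ldots,g_k,\eta(x)-\deg(F)\rangle}$. On the other hand, multilinearity in the last column writes $D=\sum_{j=0}^n x_j N_j$, where $N_j$ is zero when $j\in\beta$ and otherwise a genuine $(c+2)\times(c+2)$ minor of $B$, so $D\in I_{c+2}(B)$. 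Since $\deg(F)$ is a nonzero scalar this forces $M\in J_2$; the minors of $A$ not involving its first row already lie in $\langle g_1,\ldots,g_k\rangle\subseteq J_2$, completing the argument.
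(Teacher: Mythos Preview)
Your argument is correct and the reduction to $J_1=J_2$ is exactly the reduction the paper makes, but from there the two proofs diverge. The paper proves both inclusions by a localization argument: it covers $W_{F,\eta}$ (respectively $C_X^\circ$) by principal opens on which a suitable maximal minor of the Jacobian is invertible, invokes Lemma~\ref{technical-lemma-Cramer-rule} to write $\tfrac{\nabla F}{F}$ (respectively $\tfrac{\nabla F}{F}-\eta$) explicitly as a $\mathbb{C}[U]$-linear combination of the rows $\eta,\nabla g_{\alpha_i}$ (respectively $\nabla g_{\beta_i}$), and then reads off that the relevant minor ideal vanishes in each localization. In contrast, your proof is entirely global and determinantal: the inclusion $J_2\subseteq J_1$ is obtained by elementary row operations and Laplace expansion, and the harder inclusion $J_1\subseteq J_2$ is handled by your augmented-determinant trick, where appending the column of contractions with $x$ and invoking Euler's identity produces the factor $\deg(F)$ that links a size-$(c+1)$ minor of $A$ to a combination of size-$(c+2)$ minors of $B$. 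Your route avoids the open-cover bookkeeping and the repeated appeal to Lemma~\ref{technical-lemma-Cramer-rule}, at the cost of a slightly ad hoc construction; the paper's route is more uniform and reuses the Cramer-rule lemma that already underlies Lemmas~\ref{incidence-is-vec-bund-onCXo} and \ref{lem: trace}. One small remark: in your case ``minors missing the $\eta$-row'' you can simply say these are $(c{+}2)$-minors of $A$ and hence lie in $I_{c+1}(A)\subseteq J_1$ by Laplace expansion along any row, without needing the rank of the Jacobian; and your claim that $N_j$ is a minor of $B$ implicitly uses once more the row operation adding $\eta$ back to the first row, which you might state explicitly.
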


\begin{proof}
Fix any $u\in\mathbb{C}^{n+1}$ non-zero. By Lemma~\ref{lem:affine-patch} we have that the scheme of critical points of $\log(f_{F,u})$ on $X_{\mathrm{sm}}\setminus D_{F,u}$ is isomorphic (by projectivization) to the critical points of $\log(F|_{W_{F,u}})$ on $W_{F,u}$. Furthermore, by Lemma~\ref{how-to-really-compute-geom-MLdeg}, we have a covering of affine open subsets $V_{\sigma}\subseteq W_{F,u}$ on which the scheme of critical points of $\log(F|_{W_{F,u}})$ is given by
\[
\mathrm{Spec}(\mathbb{C}[V_\sigma ]/I_B), \qquad I_B:=I_{c+2}(B(x, u)).
\]
Recall from Lemma~\ref{how-to-really-compute-geom-MLdeg} that $\sigma$ is a $(c+1)\times(c+1)$-minor of the Jacobian matrix of $g_0,\ldots,g_k$, where $g_0=u(x)-\deg(F)$. The minor $\sigma$ involves the row that is equal to $u$ because, otherwise, there would be a non-zero $(c+1)\times(c+1)$-minor of the Jacobian of $C_X^\circ$, which contradicts the the fact that $C_X^\circ$ is smooth and of codimension $c$. Denote by $\alpha_0,\ldots,\alpha_c\in\{0,\ldots,k\}$ the indices of the rows involved in $\sigma$, with $\alpha_0=0$. Let $\beta_0,\ldots,\beta_c\in\{0,\ldots,n\}$ be the columns involved in $\sigma$. By expanding $\sigma$ with respect to the row $\alpha_0$ we obtain that
\[
\sigma=\sum_{j=0}^cu_{\beta_j}\mu_j,
\]
where the $\mu_j$ are size-$c$ minors. As these are not all simultaneously zero on $V_\sigma$, let $\mu\in\{\mu_j\}_j$ be nonzero. Over the principal open subset $U_\mu$ (see Definition~\ref{def:principal-open-from-minor-of-Jacobian-matrix}), the scheme of critical points of $\log(\ell_{F,u}|_{C_X^\circ })$ is
\[
\mathrm{Spec}(\mathbb{C}[U_\mu]/I_A), \qquad I_A:=I_{c+1}(A(x, u)).
\]
(Recall the matrix $A(x,u)$ from Definition~\ref{def:A(x,eta)-and-critical-points-ell}.)

We have then constructed the affine open subsets $U_\mu\subseteq C_X^\circ$ and $V_\sigma\subseteq W_{F,u}$ which have nonempty intersection. We observe that $U_\mu\cap V_\sigma$ is an affine subvariety of the open subset of $\mathbb{C}^{n+1}$ where $F\cdot\mu\cdot\sigma\neq0$. In particular, the ideals $I_A$ and $I_B$ can be viewed in the ring
$$
R=\mathbb{C}[U_\mu \cap V_\sigma]=\frac{\mathbb{C}[x_0,\ldots,x_n,1/F,1/\mu,1/\sigma]}{\langle g_0, \ldots, g_k\rangle}.
$$
By Lemma~\ref{lem: trace} and by the construction of $\mu,\sigma$, then we can capture all the critical points on $C_X^\circ$ and $W_{F,u}$, together with their scheme structures, as closed subschemes of $U_\mu \cap V_\sigma$. To prove that the critical points match in $U_\mu \cap V_\sigma$, it suffices to show that, within the ambient ring $R$, the ideals $I_A$ and $I_B$ are equal. This is done by showing that
\[
I_A= \langle 0 \rangle~\textrm{in}~R_B=R/I_B~\textrm{and}~I_B = \langle 0 \rangle~\textrm{in}~R_A=R/I_A.
\]

Let us first prove the former. We view the entries of the matrix $A(x,u)$ as elements of the ring $R_B$. By Lemma~\ref{technical-lemma-Cramer-rule} applied to $B(x, u)$ one can write $\frac{\nabla F}{F} = \lambda_0 u + \sum_{i=1}^{c} \lambda_i \nabla g_{\alpha_i}$ in the ring $R_B$. By taking the scalar product with the vector $(x_0, \ldots, x_n)$ and applying Euler's formula, one obtains that
\begin{align*}
    \deg(F) = \frac{\nabla F(x)}{F(x)} \cdot x &= \lambda_0 u(x) + \sum_{i=1}^{c} \lambda_i \nabla g_{\alpha_i}(x)\cdot x\\
    &= \lambda_0 u(x) + \sum_{i=1}^{c} \lambda_i \mathrm{deg}(g_{\alpha_i})g_{\alpha_i}(x)\\
    &= \lambda_0 u(x) + \sum_{i=1}^{c} \lambda_i \mathrm{deg}(g_{\alpha_i})\cdot0=\lambda_0 \deg(F) \in R_B,
\end{align*}
and thus $\lambda_0=1$. From this we argue that
\[
A(x,u)=\begin{bmatrix}
\frac{\nabla F}{F} - u \\
\nabla g_1\\
\vdots\\
\nabla g_k \\
\end{bmatrix}=
\begin{bmatrix}
\sum_{i=1}^{c} \lambda_i \nabla g_{\alpha_i} \\
\nabla g_1\\
\vdots\\
\nabla g_k \\
\end{bmatrix},
\]
where the entries are regarded as elements in $R_B$. We can conclude that $I_A = \langle 0 \rangle$ in $R_B$. 

The next step is to show that $I_B= \langle 0 \rangle$ in $R_A$, for which is enough to show that $I_{c+2}(B(x, u))$ is zero in $R_A$. We use similar ideas as above. By applying Lemma~\ref{technical-lemma-Cramer-rule} to $A(x, u)$ we have that there exist $\lambda_1,\ldots,\lambda_c\in R_A$ such that
\[
\frac{\nabla F(x)}{F(x)}-u(x)=\sum_{i=1}^c\lambda_i\nabla g_{\alpha_i}(x).
\]
Hence, again in the ring $R_A$, we can write $B(x,u)$ as follows:
\[
B(x, u)=\begin{bmatrix}
u + \sum _{i=1}^c \lambda_i \nabla g_{\alpha_i}\\
u \\
\nabla g_1\\
\vdots\\
\nabla g_k \\
\end{bmatrix}.
\]
Thus, we have $I_{c+2}(B(x,u)) = \langle 0 \rangle$ in $R_A$, which implies that $I_B = \langle 0 \rangle$ in $R_A$.
\end{proof}

The corollary below follows from Theorem~\ref{Gaussian-rational-schemes-are-equal} by taking a generic $u$.

\begin{corollary}
\label{Gaussian-geometric-schemes-are-equal}
Let $X=V(g_1,\ldots,g_k)\subseteq\mathbb{P}^n$ be an irreducible closed subvariety. Let $F$ be a homogeneous polynomial on $\mathbb{P}^n$ and $u \in \mathbb{C}^{n+1}$ generic. Then,
\[
\#\,\textrm{critical points of}~\log(f_{F,u}) ~\textrm{on}~X_{\mathrm{sm}}\setminus D_{F,u} = \mathrm{MLD}_F(X).
\]
\end{corollary}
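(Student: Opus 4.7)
The plan is to combine Theorem~\ref{Gaussian-rational-schemes-are-equal} with the genericity results established in Section~\ref{sec:The-Maximum-Likelihood-Degree}. Theorem~\ref{Gaussian-rational-schemes-are-equal} gives, for \emph{any} nonzero $\eta$, a scheme-theoretic isomorphism between the critical scheme of $\log(\ell_{F,\eta}|_{C_X^\circ})$ and that of $\log(f_{F,\eta})$ on $X\setminus D_{F,\eta}$. To pass from an isomorphism of schemes to an equality of set-theoretic cardinalities, I would invoke Lemma~\ref{lem:reduced-crit-pts}: for generic $\eta$, the first of these two schemes is either empty or zero-dimensional and reduced, so the same is automatically true for the second one via the isomorphism.

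More explicitly, the steps I would take are the following. First, fix $\eta\in\mathbb{C}^{n+1}$ in the dense open subset $U\subseteq\mathbb{C}^{n+1}$ from the proof of Lemma~\ref{lem:reduced-crit-pts}, for which the fiber $\pi_2^{-1}(\eta)$ consists of finitely many reduced points (or is empty). By Definition~\ref{def:A(x,eta)-and-critical-points-ell}, this fiber is exactly the scheme of critical points of $\ell_{F,\eta}|_{C_X^\circ}$, whose cardinality is $\mathrm{MLD}_X(F)$ by definition. Second, apply Theorem~\ref{Gaussian-rational-schemes-are-equal} with this same $\eta$: the projectivization map induces an isomorphism between the critical scheme of $\log(\ell_{F,\eta}|_{C_X^\circ})$ and the critical scheme of $\log(f_{F,\eta})$ on $X\setminus D_{F,\eta}$. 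Since isomorphic schemes have the same number of (reduced) points, the conclusion
\[
\#\{\text{critical points of }\log(f_{F,\eta})\text{ on }X\setminus D_{F,\eta}\} \;=\; \mathrm{MLD}_X(F)
\]
follows immediately.

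There is essentially no obstacle here: the content is packaged in Theorem~\ref{Gaussian-rational-schemes-are-equal} (scheme-level identification of critical loci, valid for every nonzero $\eta$) and in Lemma~\ref{lem:reduced-crit-pts} (genericity of $\eta$ ensuring the critical scheme is reduced and $0$-dimensional). The only subtlety to mention is that one should take $\eta$ generic enough to simultaneously satisfy the hypothesis $\eta\neq 0$ of Theorem~\ref{Gaussian-rational-schemes-are-equal} and lie in the open set of Lemma~\ref{lem:reduced-crit-pts}, which is harmless since both conditions are open and dense. Once these are in place, the corollary is a one-line consequence.
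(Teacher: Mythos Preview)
Your proposal is correct and follows exactly the paper's approach: the paper simply states that the corollary follows from Theorem~\ref{Gaussian-rational-schemes-are-equal} by taking generic $\eta$, and you have spelled out precisely how Lemma~\ref{lem:reduced-crit-pts} supplies the genericity needed to convert the scheme-theoretic isomorphism into an equality of point counts.
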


\begin{remark}
\label{rmk:X-degenerate-MLD-independent-choice-linear-subspace}
Let $X\subseteq\mathbb{P}^n$ be degenerate in the sense that it is contained in a proper linear subspace $L$. Then one can consider the embedding $X\subseteq L$ and $\mathrm{MLD}_{F|_L}(X)$. Because the rational functions $f_{F,u}$ and $f_{F|_L,u|_L}$ on $X$ are independent from $L$, it follows that  $\mathrm{MLD}_{F|_L}(X)=\mathrm{MLD}_F(X)$.
\end{remark}

\section{Geometric computations of ML degree}
\label{sec:geom-comp-ML-deg}
In this section we prove different geometrical characterizations of the ML degree. More precisely, we characterize the ML degree via the {\it topological Euler characteristic}, {\it projective duality}, and the {\it top Chern class}. As previously, we will consider subvarieties  $X = V(g_1, \ldots, g_k)\subseteq\mathbb{P}^n$. Moreover, $u$ will be a generic linear form on $\mathbb{P}^n$.

\subsection{MLD as topological Euler characteristic}

The proof of Theorem~\ref{thm:crit-degree-as-Euler-characteristic} relies  on the following result.

\begin{theorem}[{\cite[Equation~(2)]{STV05} as phrased in \cite[Theorem~3.10]{MRW20}}]
\label{thm:result-from-STV05}
Let $Y$ be an irreducible closed subvariety in $\mathbb{C}^n$. Let
$\ell\colon\mathbb{C}^n \to \mathbb{C}$ be a general linear function, and let $H_c$ be the hyperplane in $\mathbb{C}^n$ defined by
the equation $\ell = c$ for a general $c \in \mathbb{C}$
(i.e., there is no critical point $p$ such that $\ell(p) = c$). Then the number of critical points of $\ell|_{Y_{\mathrm{sm}}}$ is
equal to 
$$
(-1)^{\dim Y}\chi(\mathrm{Eu}_Y|_{U_c}),
$$ 
where $U_c = Y \setminus H_c$ and $\mathrm{Eu}_Y$ is the local Euler obstruction function on $Y$.
\end{theorem}

The main ideas of the proof of the following theorem are similar to the ones in the proof of \cite[Theorem~1.3]{MRW21}.

\begin{theorem}
\label{thm:crit-degree-as-Euler-characteristic}
Let $F\in\mathbb{C}[x_0,\ldots,x_n]$ be a homogeneous polynomial on $\mathbb{P}^n$ and let $X=V(g_1,\ldots,g_k)\subseteq\mathbb{P}^n$ be a closed subvariety. Then
\[
\mathrm{MLD}_F(X)=(-1)^{\dim(X)}\chi(\mathrm{Eu}_{X}|_{X\setminus D_{F,u}}),
\]
where $u\in\mathbb{C}^{n+1}$ is generic. In particular, if $X$ is smooth away from $V(F)$, then
\[
\mathrm{MLD}_F(X)=(-1)^{\dim(X)}\chi_{\mathrm{top}}(X\setminus D_{F,u}).
\]
\end{theorem}

\begin{proof}
Consider
\[
\mathbb{X}_F = V(g_1, \ldots, g_k, F(x) - 1) \subseteq \mathbb{C}^{n+1}.
\]
We show that we can translate the problem of finding critical points of $f_{F,u}$ to finding critical points of the linear function $u|_{\mathbb{X}_F}$. We start by proving that $\mathbb{X}_F$ is smooth at $p$ if and only $X$ is smooth at $[p]$. For this it is enough to show that, for each $x\in\mathbb{X}_F$, the rank of the Jacobian matrix $J(g_1,\ldots,g_k,F-1)$ at $p$ equals $1+\mathrm{rk}\,J(g_1,\ldots,g_k)(p)$. If there was a point $p$ for which $\mathrm{rk}\,J(g_1,\ldots,g_k,F-1)(p) = \mathrm{rk}\,J(g_1,\ldots,g_k)(p)$, this would imply the existence of a linear combination: $\sum_{i=1}^k\lambda_i\nabla g_i(p)=\nabla F(p)$. By taking the dot product by $p$ on both sides and applying Euler's formula, this would yield  $\deg(F)=0$, which is not the case.

Consider the projectivization map $\mathbb{C}^{n+1} \setminus \{ 0 \} \to \mathbb{P}^n$. Consider the restriction $\beta\colon V(F-1)\rightarrow\mathbb{P}^n\setminus V(F)$ of the above map. We observe that $\beta$ is \'etale: the codomain is regular, the domain is Cohen--Macaulay and the fibers of $\beta$ are equidimensional (more precisely, zero-dimensional). Then $\beta$ is flat by the Miracle Flatness Theorem \cite[Theorem~23.1]{Mat89}. Let $\alpha$ be the base change of $\beta$ to $X\setminus V(F)\subseteq\mathbb{P}^n\setminus V(F)$, whose domain is $\mathbb{X}_F$. The map $\alpha\colon\mathbb{X}_F\rightarrow X\setminus V(F)$ is an \'etale cover of degree $\deg(F)$. To prove this, let $[x]\in X\setminus V(F)$. Then the preimage $\alpha^{-1}([x])$ consists of the points $\lambda x$ for $\lambda\in\mathbb{C}$ such that $F(\lambda x)=1$. But $F(\lambda x)=\lambda^{\deg(F)}F(x)=1$ if $\lambda$ is a root of $1$ of order $\deg(F)$, so we have $\deg(F)$ distinct possibilities for $\lambda$. As $\alpha$ is flat (it is a base change of $\beta$, which is flat) and unramified, $\alpha$ is \'etale of degree $\deg(F)$.

There are two important consequences of $\alpha$ being \'etale. First, by using that the local Euler obstruction is an analytic invariant (as phrased in \cite[\S\,2]{MRW21}), we have that
\[
\chi(\mathrm{Eu}_{\mathbb{X}_F}|_{\mathbb{X}_F\setminus V(u)})=\deg(F)\cdot\chi(\mathrm{Eu}_{X}|_{X\setminus D_{F,u}}).
\]
Secondly, the logarithm of the composition
$$
(f_{F, u} \circ \alpha) (x) = \frac{\deg(F)^{\deg(F)} F(x)}{u(x)^{\deg(F)}} = \frac{\deg(F)^{\deg(F)}}{u(x)^{\deg(F)}}
$$
has exactly $\deg(F)$ distinct critical points for each critical point of $\log(f_{F, u}|_{X \setminus D_{F, u}}),$ away from $V(u(x))$. Moreover, away from $V(u(x))$, the critical points of $\log(f_{F, u} \circ \alpha)$ are the same as the critical points of $u|_{\mathbb{X}_F}$ because their gradients differ by the factor $(-\deg(F)/u(x))$.

We now show that there are no critical points $p$ of $u|_{\mathbb{X}_F}$ on $V(u(x))$. Suppose there was such a point $p$. Then the matrix $B(x,u)$ from Lemma~\ref{how-to-really-compute-geom-MLdeg} satisfies the hypothesis of  Lemma~\ref{technical-lemma-Cramer-rule} giving a linear combination  $u = \lambda_0 \frac{\nabla F}{F}(p)+ \sum_i \lambda_i \nabla g_i(p)$. Applying scalar multiplication by $p$ on both sides and using the fact that $p \in \mathbb{X}_F$ gives
\[
0 = \lambda_0 \deg(F)\text{ and thus }\lambda_0 = 0.
\]
This is equivalent to $[u] \in X^\vee$, the dual variety of $X$, which is false for generic $u$ because $X^\vee$ is a proper subvariety of $(\mathbb{P}^n)^\vee$. This shows that the number of critical points of the linear form $u|_{\mathbb{X}_F}$ is equal to $\deg(F)$ times the number of critical points of $\log(f_{F, u})$ on $X \setminus D_{F, u}$.

Another consequence of the above argument is that $0$ is not a bifurcation value for $u|_{\mathbb{X}_F}$. Therefore, we can apply Theorem~\ref{thm:result-from-STV05} to argue that the number of critical points of $u|_{\mathbb{X}_F}$ is equal to $(-1)^{\dim \mathbb{X}_F}\chi(\mathrm{Eu}_{\mathbb{X}_F}|_{\mathbb{X}_F\setminus V(u(x))})$. Notice that $\alpha|_{\mathbb{X}_F \setminus V(u(x))}$ is a degree $\deg(F)$ \'etale cover of $X \setminus D_{F, u}$ because it is the base change of $\alpha$ to the open set $X\setminus D_{F, u}$. In conclusion,
\begin{align*}
    \mathrm{MLD}_F(X)& \overset{\textrm{Cor}~\ref{Gaussian-geometric-schemes-are-equal}}{=}
    \#\,\textrm{critical points of}~\log(f_{F,u}) ~\textrm{on}~X\setminus D_{F,u} \\
    &= \frac{1}{\deg(F)} \text{ \#\,critical points of } u|_{\mathbb{X}_F}\\
    &\overset{\textrm{Thm}~\ref{thm:result-from-STV05}}{=}\frac{1}{\deg(F)} (-1)^{\dim \mathbb{X}_F}\chi(\mathrm{Eu}_{\mathbb{X}_F}|_{\mathbb{X}_F\setminus V(u)})\\
    &=(-1)^{\mathrm{dim}(X)}\chi(\mathrm{Eu}_{X}|_{X\setminus D_{F,u}}).\qedhere
\end{align*}
\end{proof}

\begin{example}
\label{ex:running-example-topological-Euler-characteristic}
Consider the example in the introduction where $\mathcal{L}\subseteq\mathbb{P}(\mathbb{S}^3)$ given by the span of $\left[\begin{smallmatrix}1&0&0\\0&0&0\\0&0&0\end{smallmatrix}\right]$, $\left[\begin{smallmatrix}0&1&0\\1&0&0\\0&0&0\end{smallmatrix}\right]$, and $\left[\begin{smallmatrix}0&0&0\\0&1&0\\0&0&1\end{smallmatrix}\right]$. We consider the function $F=\det$ restricted to $\mathcal{L}$, which describes the plane curve
\[
F = \det\left(\left[\begin{smallmatrix}x&y&0\\y&z&0\\0&0&z\end{smallmatrix}\right]\right)=z(xz-y^2).
\]
Denote by $C_1$ the line $z=0$, which is tangent to the smooth conic $C_2$ given by $xz-y^2=0$. Therefore, the divisor $D_{\det,S}$, where $S\in\mathbb{S}^n$ is generic, is given by $C_1+C_2+L$, where $L$ is a generic line in the plane $\mathcal{L}$. By Theorem~\ref{thm:crit-degree-as-Euler-characteristic} and using the inclusion-exclusion principle for $\chi_{\mathrm{top}}$, we obtain that
\begin{align*}
\mathrm{MLD}_{\det}(\mathcal{L})&=(-1)^{\dim(\mathcal{L})}\chi_{\mathrm{top}}(\mathcal{L}\setminus D_{\det,S})\\
&=\chi_{\mathrm{top}}(\mathbb{P}^2)-\chi_{\mathrm{top}}(C_1\cup C_2\cup L)\\
&=3-(\chi_{\mathrm{top}}(C_1)+\chi_{\mathrm{top}}(C_2)+\chi_{\mathrm{top}}(L)-\chi_{\mathrm{top}}(4~\textrm{points}))\\
&=3-(2+2+2-4)=1.
\end{align*}
\end{example}

\begin{example}[Formula for the ML degree of a curve]
\label{ex: MLDofcurve}
Let $F$ be a homogeneous polynomial on $\mathbb{P}^n$. Let $C\subseteq\mathbb{P}^n$ be a degree $d$ curve smooth away from $V(F)$. Then the Gaussian ML degree of $C$ can be computed using Theorem~\ref{thm:crit-degree-as-Euler-characteristic}. Let $C\cap V(F)=\{p_1,\ldots,p_k\}$. Let $\nu\colon C'\rightarrow C$ be the normalization of $C$. For each $i\in\{1,\ldots,k\}$, let $h_i$ be the cardinality of the fiber $\nu^{-1}(p_i)$. Let $g$ be the geometric genus of $C'$. Then,
\[
\mathrm{MLD}_F(C)=(-1)^{\dim C}\chi_{\mathrm{top}}(C\setminus D_{F,u})=-2+2g+d+\sum_{i=1}^kh_i.
\]
It will be interesting to generalize to higher dimensional varieties the above result for curves.
\end{example}

\begin{remark}
Note from Theorem~\ref{thm:crit-degree-as-Euler-characteristic} that $\mathrm{MLD}_F(X)$ equals $\mathrm{MLD}_G(X)$, where $G=\mathrm{rad}(F)$. This is because $D_{F,u}=D_{G,u}$.
\end{remark}

\subsection{MLD via projective duality}

Let $(\mathbb{P}^n)^{\vee}$ denote the dual projective space. Recall that the dual variety of a projective variety $Y\subseteq\mathbb{P}^n$ is the irreducible variety $Y^{\vee}\subseteq(\mathbb{P}^n)^{\vee}$ defined by the Zariski closure of
\[
\{H\in(\mathbb{P}^n)^{\vee}\mid H\text{ is tangent to }Y\text{ at some smooth point }y\in Y_{\mathrm{sm}},\text{ i.e. }T_y(Y)\subseteq H\},
\]
where $T_y(Y)$ denotes the projective tangent space to $Y$ at $y$.

\begin{theorem}
\label{thm:MLD-via-dual-variety}
Let $F\in\mathbb{C}[x_0,\ldots,x_n]$ be a homogeneous polynomial and assume that $X=V(g_1,\ldots,g_k)\subseteq\mathbb{P}^n$ is an irreducible variety. Let $\mathbb{X}_F\subseteq\mathbb{C}^{n+1}$ as in the proof of Theorem~\ref{thm:crit-degree-as-Euler-characteristic} and denote by $\overline{\mathbb{X}}_F$ its closure in $\mathbb{P}^{n+1}$. Let $m(\overline{\mathbb{X}}_F^\vee)$ be the multiplicity of $\overline{\mathbb{X}}_F^\vee$ at the point $[0:\ldots:0:1]$. Then
\begin{displaymath}
\mathrm{MLD}_F(X) = \left\{ \begin{array}{ll}
\frac{\deg(\overline{\mathbb{X}}_F^{\vee})-m(\overline{\mathbb{X}}_F^\vee)}{\deg(F)} &\textrm{if}~\overline{\mathbb{X}}_F^{\vee}~\text{is a hypersurface},\\
0&\textrm{otherwise}.
\end{array} \right.
\end{displaymath}
\end{theorem}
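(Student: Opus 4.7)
The plan is to translate the count of critical points of $\log(f_{F,\eta})$ into a Bezout-type intersection of a projective line with the dual variety $\overline{\mathbb{X}}_F^\vee$ in $(\mathbb{P}^{n+1})^\vee$. First, I would recycle the reduction used inside the proof of Theorem~\ref{thm:crit-degree-as-Euler-characteristic}: the \'etale cover $\alpha\colon\mathbb{X}_F\to X\setminus V(F)$ of degree $\deg(F)$, combined with Corollary~\ref{Gaussian-geometric-schemes-are-equal}, gives
\[
\deg(F)\cdot\mathrm{MLD}_X(F)=\#\{\text{critical points of }\eta|_{\mathbb{X}_F}\}.
\]
So it suffices to show, for generic $\eta$, that this count equals $\deg(\overline{\mathbb{X}}_F^\vee)-m_p(\overline{\mathbb{X}}_F^\vee)$ when $\overline{\mathbb{X}}_F^\vee$ is a hypersurface and $0$ otherwise.

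The second step is the geometric translation. A point $x\in\mathbb{X}_F$ is critical for $\eta|_{\mathbb{X}_F}$ exactly when $T_x\mathbb{X}_F\subseteq\ker(\eta)$, equivalently, when the projective hyperplane
\[
H_c=V(\eta_0y_0+\ldots+\eta_ny_n-cy_{n+1})\subseteq\mathbb{P}^{n+1},\qquad c=\eta(x),
\]
is tangent to $\overline{\mathbb{X}}_F$ at the smooth point $x\in\mathbb{X}_F$. As $c$ runs through $\mathbb{C}\cup\{\infty\}$, the parameters $[\eta_0:\ldots:\eta_n:-c]$ sweep out a projective line $L\subseteq(\mathbb{P}^{n+1})^\vee$ joining the two points $[\eta_0:\ldots:\eta_n:0]$ (for $c=0$) and $p=[0:\ldots:0:1]$ (the limit as $c\to\infty$, corresponding to the hyperplane at infinity $V(y_{n+1})$). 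In this language the critical points of $\eta|_{\mathbb{X}_F}$ are in bijection with the points of $L\cap\overline{\mathbb{X}}_F^\vee$ coming from a finite $c\in\mathbb{C}$ and whose tangent point lies in the affine piece $\mathbb{X}_F\subseteq\overline{\mathbb{X}}_F$.

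Assume first that $\overline{\mathbb{X}}_F^\vee$ is a hypersurface. By Bezout, $L\cdot\overline{\mathbb{X}}_F^\vee=\deg(\overline{\mathbb{X}}_F^\vee)$. For generic $\eta$ I would verify a genericity package: (i) $L$ meets the smooth locus of $\overline{\mathbb{X}}_F^\vee$ transversally at simple points off $p$; (ii) biduality on the conormal variety $\mathrm{Con}(\overline{\mathbb{X}}_F)\subseteq\mathbb{P}^{n+1}\times(\mathbb{P}^{n+1})^\vee$ identifies each such transverse intersection with a unique tangent point $x$ of $\overline{\mathbb{X}}_F$ lying in the affine piece $\mathbb{X}_F$ rather than in the boundary $\overline{\mathbb{X}}_F\setminus\mathbb{X}_F$; and (iii) the local intersection multiplicity of $L$ with $\overline{\mathbb{X}}_F^\vee$ at $p$ equals $m_p(\overline{\mathbb{X}}_F^\vee)$, by the standard fact that a generic line through a point $p$ of a hypersurface $Y$ meets the projectivized tangent cone of $Y$ at $p$ in $m_p(Y)$ distinct simple points. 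Subtracting then gives $\#(L\cap\overline{\mathbb{X}}_F^\vee\setminus\{p\})=\deg(\overline{\mathbb{X}}_F^\vee)-m_p(\overline{\mathbb{X}}_F^\vee)$, matching the critical point count, and division by $\deg(F)$ produces the claimed formula. If instead $\overline{\mathbb{X}}_F^\vee$ has codimension at least $2$, the same genericity argument shows that $L$ meets $\overline{\mathbb{X}}_F^\vee$ at most at $p$, so $\eta|_{\mathbb{X}_F}$ has no critical points and $\mathrm{MLD}_X(F)=0$.

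The main obstacle is the genericity package in step three, and in particular item (ii): ensuring that the tangent points attached to the intersections of $L$ with $\overline{\mathbb{X}}_F^\vee$ all live in the affine piece $\mathbb{X}_F$. This boils down to a dimension count on $\mathrm{Con}(\overline{\mathbb{X}}_F)$, exploiting that tangent hyperplanes to $\overline{\mathbb{X}}_F$ at boundary points $\overline{\mathbb{X}}_F\setminus\mathbb{X}_F\subseteq V(y_{n+1})$ cut out a proper closed subset of $\overline{\mathbb{X}}_F^\vee$ that a sufficiently generic $L$ meets only at $p$. A secondary subtlety is that $L$ is not a free line in $(\mathbb{P}^{n+1})^\vee$, being pinned to pass through the fixed point $p$, so one must check that the remaining degrees of freedom in $\eta$ produce a line generic through $p$ in the senses required by (i) and (iii); this is the analogue, in the linear-pencil setting, of the classical argument that the degree of the dual hypersurface counts tangent hyperplanes in a general pencil.
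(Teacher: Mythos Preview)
Your proposal is correct and follows essentially the same route as the paper: reduce via the \'etale cover to counting critical points of $\eta|_{\mathbb{X}_F}$, reinterpret these as tangencies along the pencil $L_\eta\subseteq(\mathbb{P}^{n+1})^\vee$ through $p=[0:\ldots:0:1]$, and compute $L_\eta\cdot\overline{\mathbb{X}}_F^\vee$ minus the contribution at $p$. The one place the paper is more specific than your sketch is your item~(ii): to rule out contributions from tangent hyperplanes at boundary points of $\overline{\mathbb{X}}_F$, the paper observes that $\pi_2(\pi_1^{-1}(\{t=0\}\cap\overline{\mathbb{X}}_F))$ has codimension $\geq 2$ in $(\mathbb{P}^{n+1})^\vee$, so its join with $p$ has codimension $\geq 1$ (citing \cite[Proposition~1.11]{Adl87}), and hence a generic $L_\eta$ through $p$ meets it only at $p$; this is exactly the ``dimension count on the conormal variety'' you anticipated.
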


\begin{proof}
The proof follows the ideas in \cite[Theorem~13]{C+21}. As in the proof of Theorem~\ref{thm:crit-degree-as-Euler-characteristic}, we have that $\mathrm{MLD}_F(X)$ equals the number of critical points of  a generic linear function $u|_{\mathbb{X}_F}$, divided by $\deg(F).$  We observe that $p\in\mathbb{X}_F$ is a critical point for $u|_{\mathbb{X}_F}$ if and only if $p$ is smooth and the tangent space to $\mathbb{X}_F$ at $p$ is contained in a hyperplane of the form $u(x)-c=0$ for some constant $c$. Therefore, we will prove the following: (1) If $\overline{\mathbb{X}}_F^\vee$ is not a hypersurface, then $\mathrm{MLD}_F(X)=0$, so we assume $\overline{\mathbb{X}}_F^\vee$ is a hypersurface; (2) For a generic choice of $u$, the tangent hyperplanes to $\mathbb{X}_F$ of the form $u(x)-c=0$ are tangent at exactly one point; (3) The number of such hyperplanes is finite and equal to  $\deg(\overline{\mathbb{X}}_F^\vee)-m(\overline{\mathbb{X}}_F^\vee)$. Putting these facts together yields the formula for $\mathrm{MLD}_F(X)$ in the statement.

To prove (1), (2), and (3) we need some preliminary considerations. Let $y=[x_0:\ldots:x_n:t]$ be the coordinate of $\mathbb{P}^{n+1}$, so that the closure in $\mathbb{P}^{n+1}$ of a hyperplane $u(x)-c=0$ is given by $u(x)-ct=0$. For fixed $u$, consider the line $L_u\subseteq(\mathbb{P}^{n+1})^\vee$ given by the pencil of all the hyperplanes of the form
\[
 \lambda u(x)+\mu t=0,
\]
where $[\lambda:\mu]\in\mathbb{P}^1$. We observe that, regardless of the choice of $u$, the line $L_u$ passes through the point $[0:\ldots:0:1]\in(\mathbb{P}^{n+1})^\vee$, i.e. the hyperplane at infinity $t=0$ (obtained by setting $\lambda = 0$) is part of $L_u$. We now show that there is an open subset $\mathcal{U} \subseteq \overline{\mathbb{X}}_F^\vee$ such that the points in the intersection $L_u \cap \mathcal{U}$ correspond to the tangent hyperplanes to $\mathbb{X}_F$ of the form $u(x)-c=0$. For this purpose, let us first consider the conormal variety
\[
C(\overline{\mathbb{X}}_F) := \overline{ \{ (y, H) \in \mathbb{P}^{n+1} \times (\mathbb{P}^{n+1})^\vee\mid y \in (\overline{\mathbb{X}}_F)_{\mathrm{sm}}~\textrm{and}~T_y(\overline{\mathbb{X}}_F) \subseteq H \} }
\]
with projection maps $\pi_1\colon C(\overline{\mathbb{X}}_F) \twoheadrightarrow \overline{\mathbb{X}}_F$ and $ \pi_2\colon C(\overline{\mathbb{X}}_F) \twoheadrightarrow \overline{\mathbb{X}}_F^\vee $. We then define
\begin{align*}
U&:=\pi_1^{-1}((\mathbb{X}_F)_{\mathrm{sm}})\cap\pi_2^{-1}((\overline{\mathbb{X}}_F^\vee)_{\mathrm{sm}}),\\
B&:=\pi_2(C(\overline{\mathbb{X}}_F)\setminus U)\setminus\{[0:\ldots:0:1]\},\\
\mathcal{U}&:=\overline{\mathbb{X}}_F^\vee\setminus(B\sqcup\{[0:\ldots:0:1]\}).
\end{align*}
We emphasize that $U\neq\emptyset$ because it is the intersection of two nonempty open subsets of $C(\overline{\mathbb{X}}_F)$, which is irreducible of dimension $n$. By definition, the points in $\mathcal{U}$ correspond to the tangent hyperplanes to $\mathbb{X}_F$ at smooth points. Moreover, biduality (see \cite[Theorem~1.1]{GKZ94}) gives that the contact locus for every $H \in \mathcal{U}$ is a linear space of dimension equal to $\mathrm{codim}_{(\mathbb{P}^{n+1})^\vee}(\overline{\mathbb{X}}_F^\vee) - 1$.

Let us prove (1). From our set-up, the set of tangent hyperplanes to $\mathbb{X}_F$ of the form $u(x)-c=0$ is a subset of $L_u \cap \overline{\mathbb{X}}_F^\vee \setminus \{[0:\ldots:0:1]\}$. We show that when $\overline{\mathbb{X}}_F^\vee$ is not a hypersurface this intersection is empty for generic $u$, and thus $\mathrm{MLD}_F(X)=0$. By contradiction, assume that $L_u \cap \overline{\mathbb{X}}_F^\vee \setminus \{[0:\ldots:0:1]\} \neq \emptyset$ for generic $u$. This is equivalent to the fact that the map $\overline{\mathbb{X}}_F^\vee \dashrightarrow(\mathbb{P}^{n})^\vee$, obtained by restricting the projection from $[0:\ldots:0:1]$, is dominant. This is impossible because the dimension of $\overline{\mathbb{X}}_F^\vee$ was assumed to be strictly less than $(n+1)-1=n$. This concludes the proof of (1).

From now on we assume that $\overline{\mathbb{X}}_F^\vee$ is a hypersurface. To show (2) it is enough to prove that, for a generic choice of $u$,
\[
(L_u\cap\overline{\mathbb{X}}_F^\vee)\setminus\{[0:\ldots:0:1]\}\subseteq\mathcal{U}.
\]
This suffices because then the biduality theorem guarantees that the contact locus of each hyperplane $u(x)-c=0$ tangent to $\mathbb{X}_F$ is a linear space of dimension $1-1=0$. By contradiction, assume that for a generic $u$ there exists a point in $L_u\cap B$. We then argue in a way analogous to (1) above, as the restriction to $B$ of the projection $(\mathbb{P}^{n+1})^\vee\dashrightarrow(\mathbb{P}^{n})^\vee$ from $[0:\ldots:0:1]$ is dominant. This is a contradiction because the dimension of $B$ is strictly less than $n$, because
\[
\dim \overline{B} = \dim\pi_2(C(\overline{\mathbb{X}}_F^\vee)\setminus U)\leq\dim(C(\overline{\mathbb{X}}_F^\vee)\setminus U)\leq\dim C(\overline{\mathbb{X}}_F^\vee)-1=n-1.
\]
This finishes the proof of (2).

Finally, for (3) we compute the number of points in $(L_u\cap\overline{\mathbb{X}}_F^\vee)\setminus\{[0:\ldots:0:1]\}$. Because of the genericity of $u$, the intersection $L_u\cap\overline{\mathbb{X}}_F^\vee$ is $\deg(\overline{\mathbb{X}}_F^\vee)$, and the points in this intersection have multiplicity $1$, except for $[0:\ldots:0:1]$ which may have a different multiplicity. As $L_u$ is generic through $[0:\ldots:0:1]$, we have that this point contributes to the count $\deg(\overline{\mathbb{X}}_F^\vee)$ by $m(\overline{\mathbb{X}}_F^\vee)$, which we then have to remove from the count as it does not correspond to critical points of $u|_{\mathbb{X}_F}$. This gives the claimed formula for $\mathrm{MLD}_F(X)$.
\end{proof}

\begin{example}
\label{ex:running-example-dual-variety}
Let $\mathcal{L}\subseteq\mathbb{P}(\mathbb{S}^3)$ and $F=\det|_{\mathcal{L}}$ as in Example~\ref{ex:running-example-topological-Euler-characteristic}. Since $\mathcal{L}$ is already a projective linear space, by Remark~\ref{rmk:X-degenerate-MLD-independent-choice-linear-subspace} it is unnecessary to consider the embedding into $\mathbb{P}(\mathbb{S}^3)$ and have $\mathcal{L}$ itself as our ambient space. So, $\mathbb{X}_{\det} = V(z(xz - y^2) - 1) \subseteq C_\mathcal{L} \cong \mathbb{C}^3$  and, using Macaulay2 \cite{M2,Sta}, we can compute the dual variety of $\overline{\mathbb{X}}_{\det}=V(xz^2-y^2z-t^3)$ to be
\[
\overline{\mathbb{X}}_{\det}^\vee = V(27y^4-216xy^2z+432x^2z^2+64xt^3),
\]
where we identified $\mathbb{P}^3 \cong (\mathbb{P}^3)^\vee$. In this example $[0:\ldots:0:1] \in \overline{\mathbb{X}}_F^\vee$ is a smooth point. So, by Theorem~\ref{thm:MLD-via-dual-variety}, we obtain that:
\[
\mathrm{MLD}_F(\mathcal{L})=\frac{\deg(\overline{\mathbb{X}}_F^\vee)-1}{\deg(F)}=1,
\]
which agrees with the computation in Example~\ref{ex:running-example-topological-Euler-characteristic}.
\end{example}

\begin{example}
Suppose $X \cong \mathbb{P}^2$ and suppose $F = x^3 - y^2z$ is a cuspidal curve. Then $\mathbb{X}_F = V(x^3 - y^2z - 1) \subseteq \mathbb{C}^3$ and the dual variety of $\overline{\mathbb{X}}_F$, again computed using Macaulay2, is given by
\[
\overline{\mathbb{X}}_F^\vee = V(16x^6+216x^3y^2z+729y^4z^2+32x^3t^3-216y^2zt^3+16t^6),
\]
where we identified $\mathbb{P}^3 \cong (\mathbb{P}^3)^\vee$. Since $[0:\ldots:0:1]\not\in \overline{\mathbb{X}}_F^\vee$, by Theorem~\ref{thm:MLD-via-dual-variety} we obtain that
\[
\mathrm{MLD}_F(X)=\frac{1}{\deg(F)}\deg(\overline{\mathbb{X}}_F^\vee)=\frac{6}{3}=2. 
\]
\end{example}

\begin{example}
Consider $\mathcal{L}=\mathbb{P}^{n}$ and define
\[
F(x) = \left(\sum_{i=0}^nc_ix_i\right)^d, \quad c_i \in \mathbb{C},~d\in\mathbb{Z}_{>0}.
\]
Then $\mathbb{X}_{F}=V((\sum_i c_ix_i)^d-1)\subseteq\mathbb{C}^{n+1}$, hence $\overline{\mathbb{X}}_F\subseteq\mathbb{P}^{n+1}$ is a disjoint union of $d$ hyperplanes. Notice that the dual variety of a linear subspace is never a hypersurface if $n\geq2$. It follows that $\mathrm{MLD}_F(\mathcal{L})=0$ if $n\geq2$.
\end{example}

\subsection{MLD via Chern classes}

Let $X\subseteq\mathbb{P}^n$ be an irreducible projective subvariety which is smooth away from $V(F)$. Let $D_{F, u}$ be the support of $\mathrm{div}(f_{F,u})$ and consider $D_{F, u}=C+L$, where $C$ is the vanishing locus of $F$ and $L$ the vanishing locus of a generic linear form $u$. Recall that in this setting we may consider a log resolution of singularities \cite[Definition~1.12]{Kol13} which we will denote by $\pi \colon (Y,B)\rightarrow (X,D_{F, u})$, where $B=\pi^{-1}(D_{F,u})$.

\begin{theorem}
\label{thm:Gauss-ML-degree-using-tot-Chern-class}
Let $X$, $D_{F,u}$, and $\pi$ as above and assume that
$B = \mathrm{Supp}(\mathrm{div}(f_{F, u} \circ \pi)) = \sum_{i=1}^r B_i$, where the latter equality is the decomposition into irreducible components. Then $\mathrm{MLD}_F(X)$ is the degree of the coefficient in front of $z^d$ in
\[
c_{\mathrm{tot}}(\Omega_Y^1) \prod_{i=1}^r (1 - zB_i)^{-1}\in A^*(Y)[z].
\]
\end{theorem}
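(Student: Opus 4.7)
The strategy is to transport the computation of $\mathrm{MLD}_X(F)$ from $X$ to the log resolution $Y$, where the rational function $f_{F,\eta}\circ\pi$ has simple normal crossings divisor, and then invoke \cite[Corollary~5]{CHKS06}, which expresses the count of critical points of a rational function with SNC divisor on a smooth projective variety via precisely such a Chern class generating function. Concretely, I first apply Corollary~\ref{Gaussian-geometric-schemes-are-equal}: for generic $\eta$,
\[
\mathrm{MLD}_X(F) = \#\{\text{critical points of } \log f_{F,\eta} \text{ on } X \setminus D_{F,\eta}\}.
\]
Since $\pi\colon(Y,B)\to(X,D_{F,\eta})$ is a log resolution, $\pi$ restricts to an isomorphism $\pi|_{Y\setminus B}\colon Y\setminus B \to X\setminus D_{F,\eta}$. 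The critical scheme of $\log f_{F,\eta}$ on $X\setminus D_{F,\eta}$ is therefore identified, with all multiplicities preserved, with the critical scheme of $\log(f_{F,\eta}\circ\pi)$ on $Y\setminus B$, and by Lemma~\ref{lem:reduced-crit-pts} this scheme is reduced and zero-dimensional for generic $\eta$.

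Next, I apply \cite[Corollary~5]{CHKS06} to the pair $(Y,B)$ and the rational function $f_{F,\eta}\circ\pi$. The required hypotheses all hold: $Y$ is smooth and projective by construction of the log resolution; the pullback divisor $B = \sum_{i=1}^r B_i$ is SNC by the defining property of a log resolution; the support of $\mathrm{div}(f_{F,\eta}\circ\pi)$ equals $B$ by the standing assumption of the theorem. The conclusion of that corollary is exactly that the number of critical points of $\log(f_{F,\eta}\circ\pi)$ on $Y\setminus B$ equals the degree of the coefficient of $z^d$ in
\[
c_{\mathrm{tot}}(\Omega_Y^1)\prod_{i=1}^r (1-zB_i)^{-1} \in A^*(Y)[z],
\]
where $d=\dim Y=\dim X$. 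Chaining the two identifications yields the statement of the theorem. Geometrically, this generating function encodes the total Chern class of the logarithmic cotangent bundle $\Omega_Y^1(\log B)$ (obtained from the residue short exact sequences $0\to\Omega_Y^1\to\Omega_Y^1(\log B_i)\to\mathcal{O}_{B_i}\to 0$, using $c(\mathcal{O}_{B_i})=(1-B_i)^{-1}$), with $z$ serving as a formal variable that records the contributions of the boundary components.

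The main obstacle is verifying the genericity conditions tacit in \cite[Corollary~5]{CHKS06}, namely that the critical locus of $\log(f_{F,\eta}\circ\pi)$ on $Y\setminus B$ is reduced, zero-dimensional, and does not accumulate along $B$. Reducedness and zero-dimensionality follow from Lemma~\ref{lem:reduced-crit-pts} transferred through the isomorphism $\pi|_{Y\setminus B}$. The separation from $B$ is ensured by choosing $\eta$ generically: then $V(\eta)\subseteq\mathbb{P}^n$ is transverse to $X$ away from $V(F)$, its strict transform on $Y$ meets the exceptional components of $\pi$ transversely, and the critical locus of $\log(f_{F,\eta}\circ\pi)$ avoids $B$ by the same argument used in the proof of Theorem~\ref{thm:crit-degree-as-Euler-characteristic} to show that no critical point of $\eta|_{\mathbb{X}_F}$ lies on $V(\eta)$.
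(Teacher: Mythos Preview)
Your overall strategy matches the paper's: transport the critical-point count to $Y$ via the isomorphism $Y\setminus B\cong X\setminus D_{F,\eta}$, then invoke \cite[Corollary~5]{CHKS06}. However, you have not verified one of the actual hypotheses of that corollary, and your final paragraph addresses conditions that are not what is required.

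The hypothesis you omit is that $B$ meets every curve on $Y$. This is what guarantees that $Y\setminus B$ is affine (equivalently, that $B$ supports an ample divisor), and it is essential for \cite[Corollary~5]{CHKS06} to apply. The paper checks this explicitly in two steps: first, $D_{F,\eta}$ meets every curve on $X$ because it is linearly equivalent to a positive multiple of a hyperplane section, hence ample; second, $B$ contains every exceptional divisor of $\pi$ by construction, so it also meets every curve contracted by $\pi$. Together these show $B$ intersects every curve on $Y$. You do not mention this at all, and it does not follow automatically from ``$Y$ smooth projective and $B$ SNC.'' Without it the corollary can genuinely fail.

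By contrast, the issues you discuss in your last paragraph---reducedness, zero-dimensionality, and critical points not accumulating along $B$---are not hypotheses of \cite[Corollary~5]{CHKS06}; once the ``meets every curve'' and local freeness conditions hold, that result computes the critical count directly. So that paragraph should be replaced by the ampleness argument above.
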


\begin{proof}
By definition, $\mathrm{MLD}_F(X)$ equals the number of critical points of $f_{F,u}$ on $X\setminus D_{F,u}$. Let $\widetilde{f}_{F,u}:=f_{F, u} \circ \pi$. As we have an isomorphism $Y\setminus B\cong X\setminus D_{F,u}$ (here we used that $X$ is smooth away from $V(F)$), we have that $\mathrm{MLD}_F(X)$ is equal to the number of critical points of $\widetilde{f}_{F,u}$ on $Y\setminus B$. Note that $B$ intersects every curve on $Y$ because $D_{F,u}$ intersects every curve on $X$ as it is ample (it is linearly equivalent to a positive multiple of a hyperplane section) and $B$ intersects every exceptional divisor of $Y\rightarrow X$ because it contains them by definition. Additionally, the sheaf $\Omega_Y^1(\log B)$ is locally free (see for instance \cite[Example~2.1]{Dol07}). Hence, we can apply \cite[Corollary~5]{CHKS06} to $Y$ and the rational function $\widetilde{f}_{F,u}$ to argue that the number of critical points of $\widetilde{f}_{F,u}$ on $Y\setminus B$ equals the coefficient of $z^d$ in the above statement.
\end{proof}

\begin{example}
\label{ex:running-example-tot-CHern-class}
Let $\mathcal{L}\subseteq\mathbb{P}(\mathbb{S}^3)$ and $F=\det$ as in Example~\ref{ex:running-example-topological-Euler-characteristic}. Let us compute $\mathrm{MLD}_{\det}(X)$ using Theorem~\ref{thm:Gauss-ML-degree-using-tot-Chern-class}. Let $D_{\det,S}$ be the support of the divisor associated to the rational function $\det(M)/\mathrm{tr}(SM)^3$. A log resolution $(Y,B)$ of $(\mathcal{L},D_{\det,S})$ can be obtained as the following iterated blow up of $\mathcal{L}$:
\begin{itemize}

\item Let $\mathcal{L}_1$ be the simple blow up of $\mathcal{L}$ at the intersection point $C_1\cap C_2$. Denote by $E_1$ the exceptional divisor;

\item Let $Y$ be the simple blow up of $\mathcal{L}_1$ at the intersection point $\widehat{C}_1\cap\widehat{C}_2\cap E_1$ with exceptional divisor $E_2$.

\end{itemize}
Denote by $\widehat{C}_1,\widehat{C}_2\subseteq Y$ the strict transforms of $C_1,C_2$. For simplicity of notation, we still denote by $E_1,L\subseteq Y$ the respective strict transforms. Note that $K_Y=-3L+E_1+2E_2$. The needed intersection numbers among $\widehat{C}_1,\widehat{C}_2,L,E_1,E_2,K_Y$ are summarized in the following table:
\begin{displaymath}
\renewcommand{\arraystretch}{1.4}
\begin{array}{|c|c|c|c|c|c|c|}
\hline
\cdot&\widehat{C}_1&\widehat{C}_2&L&E_1&E_2&K_Y\\
\hline
\widehat{C}_1&-1&0&1&0&1&-1\\
\hline
\widehat{C}_2&&2&2&0&1&-4\\
\hline
L&&&1&0&0&-3\\
\hline
E_1&&&&-2&1&0\\
\hline
E_2&&&&&-1&-1\\
\hline
\end{array}
\end{displaymath}
On the other hand, $c_{\mathrm{tot}}(\Omega_Y^1)=1+K_Yz+\chi_{\mathrm{top}}(Y)z^2=1+K_Yz+5z^2$. So we need to find the coefficient of $z^2$ after expanding the following product:
\[
(1+K_Yz+5z^2)(1+\widehat{C}_1z-z^2)(1+\widehat{C}_2z+2z^2)(1+Lz+z^2)(1+E_1z-2z^2)(1+E_2z-z^2),
\]
which yields $\mathrm{MLD}_{\det}(\mathcal{L})=1$ after an explicit calculation.
\end{example}

\section{Application to Gaussian ML degree}
\label{sec:Application-to-Gaussian-ML-degree}
Consider the complex vector space $\mathbb{S}^n$ of symmetric $n\times n$ matrices.
A motivating example for studying critical points of log functions is the Gaussian maximum likelihood degree (Gaussian ML degree) of a subvariety  $X\subseteq\mathbb{P}(\mathbb{S}^n)$, defined as the number of critical points of the \emph{Gaussian log likelihood function}
\[
\ell_{S}(M)=\log(\det(M))-\mathrm{tr}(SM)
\]
restricted to $C^\circ_X\subseteq\mathbb{S}^n$ over $X$, for a choice of generic matrix $S\in\mathbb{P}(\mathbb{S}^n)$. Notice that all the elements of $\mathbb{P}(\mathbb{S}^{n})^\vee$ can be expressed in the form $u(M)=\mathrm{tr}(SM)$. Using the notation introduced in this paper, we can adopt the following equivalent definition.

\begin{definition}
For $X \subseteq \mathbb{P}(\mathbb{S}^{n})$ we define
\[
\mathrm{Gaussian\,ML\,degree}(X) = \mathrm{MLD}_{\det}(X).
\]
\end{definition}

In particular, it follows that the Gaussian ML degree can be computed as a special case of Theorem~\ref{thm:crit-degree-as-Euler-characteristic} with $F=\det$. We make this explicit in the next corollary.

\begin{corollary}
\label{cor:GaussMLasEulerchar}
Let $X\subseteq\mathbb{P}(\mathbb{S}^n)$ be a closed subvariety. Then
\[
\mathrm{Gaussian\,ML\,degree}(X)=(-1)^{\dim(X)}\chi(\mathrm{Eu}_X|_{X\setminus D}),
\]
where $D=\mathrm{Supp}\left(\mathrm{div}\left(\frac{\det(M)}{\mathrm{tr}(SM)^n}\right)\right)$ for some generic $S \in \mathbb{S}^n$.
\end{corollary}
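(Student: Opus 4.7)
The plan is to view this corollary as essentially a direct specialization of Theorem~\ref{thm:crit-degree-as-Euler-characteristic}, so the job is just to match up the ingredients carefully and verify that no genericity is lost when we pass from a generic linear form $\eta$ to a generic symmetric matrix $S$.

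First I would invoke the definition above, which states that $\mathrm{Gaussian\,ML\,degree}(X)=\mathrm{MLD}_X(\det(M))$. The ambient projective space is $\mathbb{P}(\mathbb{S}^n)$, and the identification $\mathbb{S}^n\cong(\mathbb{S}^n)^\vee$ furnished by $(S,M)\mapsto\tr(SM)$ shows that every nonzero linear functional on $\mathbb{S}^n$ is of the form $\eta(M)=\tr(SM)$ for some nonzero $S\in\mathbb{S}^n$. This identification is a linear isomorphism, so a generic choice of $\eta$ is the same as a generic choice of $S$; in particular the hypothesis ``$\eta$ generic'' required in Theorem~\ref{thm:crit-degree-as-Euler-characteristic} is met by taking $S$ generic.

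Next I would match the divisor. Setting $F=\det(M)$ and $\eta(M)=\tr(SM)$, the rational function of Section~\ref{sec:geometric-formulation} becomes
\[
f_{F,\eta}(M)=n^{n}\,\frac{\det(M)}{\tr(SM)^{n}},
\]
since $\deg(\det)=n$. The nonzero scalar factor $n^n$ does not affect the divisor, so
\[
D_{F,\eta}=\mathrm{Supp}(\mathrm{div}(f_{F,\eta}))=\mathrm{Supp}\!\left(\mathrm{div}\!\left(\tfrac{\det(M)}{\tr(SM)^{n}}\right)\right)=D,
\]
matching the $D$ appearing in the statement.

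Finally I would apply Theorem~\ref{thm:crit-degree-as-Euler-characteristic}, whose hypotheses (that $X$ is closed in $\mathbb{P}^n$ and smooth away from $V(F)$) are exactly the hypotheses of the corollary with $F=\det$. This yields
\[
\mathrm{Gaussian\,ML\,degree}(X)=\mathrm{MLD}_X(\det)=(-1)^{\dim(X)}\chi_{\mathrm{top}}(X\setminus D_{F,\eta})=(-1)^{\dim(X)}\chi_{\mathrm{top}}(X\setminus D),
\]
as claimed. There is no genuine obstacle here; the only thing to be a little careful about is the linear identification $\mathbb{S}^n\cong(\mathbb{S}^n)^{\vee}$ and the fact that the $n^n$ prefactor in $f_{F,\eta}$ is harmless at the level of supports of divisors.
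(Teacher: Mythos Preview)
Your proposal is correct and follows exactly the paper's approach: the corollary is stated as an immediate specialization of Theorem~\ref{thm:crit-degree-as-Euler-characteristic} with $F=\det$, using the identification of linear forms on $\mathbb{S}^n$ with trace pairings $M\mapsto\tr(SM)$ already noted at the start of Section~\ref{sec:Application-to-Gaussian-ML-degree}. Your explicit checks (that genericity of $\eta$ corresponds to genericity of $S$, and that the $n^n$ factor does not change the support of the divisor) are the only details worth spelling out, and you have handled them correctly.
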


\begin{remark}
\label{rmk:Gauss-MLD-via-polar-Cremona-transform-and-Milnor-number}
The Gaussian ML degree was first studied in \cite{SU10}, and since then it has been intensively investigated, especially in the case where $X$ is a linear space $\mathcal{L}\subseteq\mathbb{P}(\mathbb{S}^n)$. In this case, the result of Corollary~\ref{cor:GaussMLasEulerchar} was already embedded in the literature. If $F=\det(M)|_{\mathcal{L}}$, then the Gaussian ML degree of $\mathcal{L}$ equals the degree of the \textit{polar Cremona transformation} $\nabla F\colon\mathcal{L}\dashrightarrow\mathcal{L}^\vee$ by \cite[Proposition~2.4.1]{DMS21} (see \cite{Dol00,DP03} for more background about the polar Cremona transformation). Then, the degree of $\nabla F$ can be computed as a topological Euler characteristic by \cite[Theorem~1]{DP03}, yielding the formula as in Corollary~\ref{cor:GaussMLasEulerchar} for the particular case of a linear subspace. Explicitly,
\begin{equation*}
\mathrm{Gaussian\,ML\,degree}(\mathcal{L})=(-1)^{\dim\mathcal{L}} \chi_{\mathrm{top}}(\mathcal{L}\setminus(V(F)\cup H)),
\end{equation*}
where  $H$ is a generic hyperplane in $\mathcal{L}$.

There is a second geometric interpretation of the degree of $\nabla F$. Assume that $V(F)\subseteq\mathcal{L}$ has only isolated singularities. Then, by \cite[Section~3]{DP03} (for further details see \cite[Chapter~5, Section~4]{Dim92}), this can be interpreted in terms of the Milnor numbers $\mu(p)$ of $p\in V(F)\setminus V(F)_{\mathrm{sm}}$ (see for instance \cite[Page~10]{Dim92} for the definition in terms of the Jacobian ideal of $F$), which yields
\[
\mathrm{Gaussian\,ML\,degree}(\mathcal{L})=(d-1)^{\dim(\mathcal{L})}-\sum_{p\in V(F)\setminus V(F)_{\mathrm{sm}}} \mu(p).
\]
Related to this, see also \cite{Huh14}.
\end{remark}

\begin{example}
\label{ex:running-example-Milnor}
A net is a two-dimensional projective linear space $\mathcal{L}$, spanned by three non-aligned points in $\mathbb{P}(\mathbb{S}^n)$. If we define $F:=\det|_\mathcal{L}$, then the variety cut out by $F=0$ is a plane curve. Computing the Gaussian ML degree of $\mathcal{L}$ via Corollary~\ref{cor:GaussMLasEulerchar}, we can disregard the multiplicities of the irreducible components of such curve, so we can assume that $F$ is a reduced polynomial. In particular, the singularities of this curve consist of isolated points. Then, we can use the Milnor number formula in Remark~\ref{rmk:Gauss-MLD-via-polar-Cremona-transform-and-Milnor-number} to compute the Gaussian ML degree of $\mathcal{L}$. The following code in Macaulay2 for $n=3$ and $\mathcal{L}=\mathrm{Span}\left\{\left[\begin{smallmatrix}1&0&0\\0&0&0\\0&0&0\end{smallmatrix}\right], \left[\begin{smallmatrix}0&1&0\\1&0&0\\0&0&0\end{smallmatrix}\right], \left[\begin{smallmatrix}0&0&0\\0&1&0\\0&0&1\end{smallmatrix}\right]\right\}$ computes the Gaussian ML degree of $\mathcal{L}$ using the Milnor numbers of the singularities, yielding $1$. The code can be adjusted to compute any example for arbitrary $n$ and $\mathcal{L}$. The Milnor numbers are computed using the Macaulay2 code in \cite{Alu03}.

\begin{verbatim}
load "CSM.m2"
R = QQ[x,y,z]
-- Provide a basis for a net L of matrices of arbitrary size
L = { sub(matrix({{ 1, 0, 0 }, { 0, 0, 0 }, { 0, 0, 0 }}),R),
      sub(matrix({{ 0, 1, 0 }, { 1, 0, 0 }, { 0, 0, 0 }}),R),
      sub(matrix({{ 0, 0, 0 }, { 0, 1, 0 }, { 0, 0, 1 }}),R),
       }
-- Compute F
F = det(x*L_0 + y*L_1 + z*L_2)
I = radical ideal F
d = degree I
--Compute sum of Milnor numbers
milnor(I)
\end{verbatim}
Then one can compute $\mathrm{Gaussian\,ML\,degree}(\mathcal{L})$ as $(d-1)^2$ minus the sum of the Milnor numbers produced above. In this example $d=3$ and the sum of the Milnor numbers is $3$, resulting in $\mathrm{Gaussian\,ML\,degree}(\mathcal{L}) = 2^2-3=1$.
\end{example}

In \cite{AGKMS21} the authors gave a characterization of the Gaussian ML degree for linear subspaces of $\mathbb{P}(\mathbb{S}^3)$ using the inverse linear space $\mathcal{L}^{-1}$, which is defined to be the Zariski closure in $\mathbb{P}(\mathbb{S}^n)$ of the set of inverses of invertible matrices in $\mathcal{L}$ (see also \cite{MMMSV23,MMW21}). More explicitly, for a generic linear space $\mathcal{L}$, the Gaussian ML degree of $\mathcal{L}$ equals the degree of $\mathcal{L}^{-1}$. The Gaussian ML degree for $\mathcal{L}$ and $\mathcal{L}^{-1}$ in the case where $\dim(\mathcal{L})=1$ was studied in \cite{FMS21}.

\begin{remark}
Consider a variety $X \subseteq \mathbb{P}(\mathbb{S}^n)$ and let
\[
X^{-1} := \overline{\{M \in \mathbb{P}(\mathbb{S}^n)\setminus V(\det)\mid M^{-1} \in X\} }.
\]
The open subsets of invertible matrices in $X$ and $X^{-1}$ are isomorphic via the inversion map. This implies that
\[
X\setminus (V(\det) \cup H) \cong X^{-1}\setminus (V(\det) \cup H^{-1}),
\]
which can be relevant to compute Gaussian ML degrees, as the next examples show.
\end{remark}

\begin{example}
Let $X\subseteq\mathbb{P}(\mathbb{S}^3)$ and suppose that 
\[
X^{-1} = \mathrm{Span}\left( \left[\begin{smallmatrix}0&0&1\\0&1&0\\1&0&0\end{smallmatrix}\right], \left[\begin{smallmatrix}0&0&0\\0&0&1\\0&1&0\end{smallmatrix}\right], \left[\begin{smallmatrix}1&0&0\\0&0&0\\0&0&0\end{smallmatrix}\right]\right) \cong \mathbb{P}^2.
\]
Then one can reconstruct the original $X$ by considering the adjugate of a matrix of $\left[\begin{smallmatrix}z&0&x\\0&x&y\\x&y&0\end{smallmatrix}\right]$, which is
\[
\begin{bmatrix}
-y^2 & xy & -x^2 \\
xy & -x^2 & -yz \\
-x^2 & -yz & xz
\end{bmatrix},
\]
and finding the relations among its entries. Explicitly, we obtain that
\[
X = V(x_{22}-x_{13}, x_{13}x_{23}-x_{12}x_{33}, x_{12}x_{23}-x_{11}x_{33}, x_{12}^2-x_{11}x_{13}).
\]
For a generic hyperplane $H=V(\sum_{1 \leq i,j \leq 3} s_{ij}x_{ij})\subseteq\mathbb{P}(\mathbb{S}^3)$, let $Y := H^{-1} \cap X^{-1}$. In the coordinates $x,y,z$ on the plane $X^{-1}$ we can express $Y$ as 
\[
Y=V\left( (2s_{13}+s_{22})x^2 + s_{11}y^2 - 2s_{12}xy +   2s_{23}yz - s_{33}xz \right) \subseteq \mathbb{P}^2.
\]
This results in a smooth conic that passes through $[0:0:1]$. Note that $\det\left( \left[\begin{smallmatrix}z&0&x\\0&x&y\\x&y&0\end{smallmatrix}\right]\right) = -(x^3 + y^2z)=0$ is a curve with a cusp singularity at $[0:0:1]$, and the intersection multiplicity of $Y$ and $V(\det)$ at this point is $2$. One can also check that the multiplicities of the remaining intersection points between $Y$ and $V(\det)$ are equal to one. As the intersection product $Y\cdot V(\det)$ equals $6$, we must have that $Y$ and $V(\det)$ intersect in five distinct points. From this we can argue that
\begin{align*}
    \mathrm{Gaussian\,ML\,degree}(X)&= \chi_{\mathrm{top}}(X\setminus (V(\det) \cup H)) \\
    &= \chi_{\mathrm{top}}(X^{-1}\setminus (V(\det) \cup H^{-1}))\\
    &=\chi_{\mathrm{top}}(\mathbb{P}^2 \setminus (V(x^3 + y^2z) \cup Y))\\
    &=\chi_{\mathrm{top}}(\mathbb{P}^2 ) - \chi_{\mathrm{top}}(V(x^3 + y^2z)) - \chi_{\mathrm{top}}(Y) + \chi_{\mathrm{top}}(Y \cap V(F)) \\
    &= 3 - 2 - 2 + 5 = 4.
\end{align*}
\end{example}

\begin{example}
Consider a line $\mathcal{L} \subseteq \mathbb{P}(\mathbb{S}^n)$ and its inverse linear space $\mathcal{L}^{-1}$. Then the set of invertible matrices of $X$ and $\mathcal{L}$ are isomorphic via the inversion map. Note that
\begin{align*}
    \mathrm{Gaussian\,ML\,degree}(\mathcal{L}^{-1}) &=(-1)^{\dim\mathcal{L}^{-1}} \chi_{\mathrm{top}}(\mathcal{L}^{-1} \setminus (V(\det) \cup H))\\ 
    &=-\chi_{\mathrm{top}}(\mathcal{L}^{-1} \setminus V(\det))  + \deg(\mathcal{L}^{-1}),
\end{align*}
because $H$ intersects $\mathcal{L}^{-1}$ in $\deg(\mathcal{L}^{-1}) $ many points by definition of degree. Moreover, 
\begin{align*}
    -\chi_{\mathrm{top}}(\mathcal{L}^{-1} \setminus V(\det)) = -\chi_{\mathrm{top}}(\mathcal{L} \setminus V(\det)) =
    \mathrm{Gaussian~ML~degree}(\mathcal{L})  - 1.
\end{align*}
In conclusion,
\[
\mathrm{Gaussian\,ML\,degree}(\mathcal{L}^{-1}) =  \mathrm{Gaussian\,ML\,degree}(\mathcal{L}) + \deg(\mathcal{L}^{-1})  - 1.
\]
Note that this is in accordance with \cite[Theorem~4.2]{FMS21}.
\end{example}

\section{Comparison to discrete ML degree}\label{discrete}

The discrete likelihood function is a construction originating from statistics and introduced to the unfamiliar reader in the algebraic statistics setting in \cite{CHKS06,HS14,Huh13}. An equivalent starting point is to directly define the \emph{discrete likelihood function} as
\begin{align}
\begin{split}
\label{eq:disc-likelihood-function}
    f_u\colon\mathbb{P}^n&\dashrightarrow \mathbb{C}
    \\
    [x]&\mapsto \frac{\prod_{i=0}^{n} x_i^{u_i}}{(\sum_{i=0}^n x_i)^{\sum_{i}u_i}},
\end{split}
\end{align}
where $u=(u_0,\ldots,u_{n})\in (\mathbb{Z}_{\geq 0})^{n+1}$. We call $\log(f_u)$ the \textit{discrete log likelihood function} and we use the notation $D = \mathrm{Supp}(\mathrm{div} (f_u))$. The aim of \cite{CHKS06,HS14,Huh13} is to study the number of complex critical points of $\log f_u(x)|_{X \setminus D}$ for generic $u$, i.e. $u$ not lying on the solution set of certain discrete polynomial equations. The number of critical points is by definition  the \emph{discrete ML degree of $X \subseteq \mathbb{P}^n$}. Note that $D$ is independent of the choice of $u$ and $D=D_{F,(1,\ldots,1)}$ with $F=\prod_{i=0}^nx_i$.

In \cite{Huh13} it is shown that, if $X$ is smooth away from $D$, then
\[
\mathrm{Discrete~ML~degree}(X) = (-1)^{\dim(X)} \chi_{\mathrm{top}}(X \setminus D).
\]
Let $\mathbb{P}^{n} \subseteq \mathbb{P}(\mathbb{S}^{n+1})$ be the projective subspace parametrizing diagonal matrices and consider a closed subvariety $X \subseteq \mathbb{P}^{n}$. Recall from Theorem~\ref{thm:crit-degree-as-Euler-characteristic} that, assuming smoothness of $X$ away from $V(F)$,
\[
\mathrm{Gaussian~ML~degree}(X) = (-1)^{\dim(X)} \chi_{\mathrm{top}}(X \setminus D_{\det,\eta}),
\]
where $D_{\det,\eta} = \mathrm{Supp}\left(\mathrm{div} \left(\frac{\prod_{i=0}^{n} x_i}{(\sum_{i=0}^n \eta_ix_i)^{n+1}}\right)\right)$ and $\eta\in\mathbb{C}^{n+1}$ is generic. It is therefore natural to explore a connection between the two notions of ML degree. The following result is a first attempt in this direction.

\begin{proposition}
\label{ineqDiscrete}
Let $X \subseteq \mathbb{P}^n \subseteq \mathbb{P}(\mathbb{S}^{n+1})$ be a closed subvariety, where we view $\mathbb{P}^n$ as the projective subspace of diagonal matrices. Then
\[
\mathrm{Discrete~ML~degree}(X) \leq \mathrm{Gaussian~ML~degree}(X).
\]
\end{proposition}

\begin{proof}
Let $u=(u_0,\ldots,u_n)\in(\mathbb{Z}_{\geq0})^{n+1}$ be generic. We have that $\mathrm{Discrete~ML~degree}(X)$ equals the number of critical points of $\log f_u(x)|_{X \setminus \mathrm{Supp}(\mathrm{div} (f_u))}$, which are finitely many and reduced (see \cite[Theorem~1.6]{HS14}). Let $\varphi\colon\mathbb{P}^n \hookrightarrow \mathbb{P}(\mathbb{S}^{\sum_i u_i })$ be the linear embedding given by
\[
[x_0:\ldots:x_n]\mapsto[\mathrm{diag}(\underbrace{x_0,\ldots,x_0}_{ u_0\textrm{ times}},\ldots,\underbrace{x_n,\ldots,x_n}_{ u_n\textrm{ times}})],
\]
where the latter denotes the projective equivalence class of the diagonal matrix with the given entries. The determinant function $\det$ is a homogeneous polynomial on $\mathbb{P}^{\sum_iu_i-1}$ such that $(\det\circ\,\varphi)(x_0,\ldots,x_n) = \prod_{i=0}^nx_i^{u_i}$. Let $\eta \in (\mathbb{P}^n)^\vee$ be generic. We can always choose $\lambda \in \mathbb{P}(\mathbb{S}^{n+1})^\vee$ and $\nu \in \mathbb{P}(\mathbb{S}^{\sum_i u_i })^\vee$ such that both $\lambda, \nu$ restrict to $\eta$ on $\mathbb{P}^n$. This gives us the following diagram:
\begin{center}
\begin{tikzpicture}[>=angle 90]
\matrix(a)[matrix of math nodes,
row sep=2em, column sep=2em,
text height=1.5ex, text depth=0.25ex]
{X& \mathbb{P}^n  & \mathbb{P}(\mathbb{S}^{n+1}) & \mathbb{C}\\
&&\mathbb{P}(\mathbb{S}^{\sum_i u_i }). & \\ };
\path[->] (a-1-1) edge node[above]{}(a-1-2);
\path[->] (a-1-2) edge node[above]{}(a-1-3);
\path[->] (a-1-2) edge node[below left]{$\varphi$}(a-2-3);
\path[dashed,->] (a-1-3) edge node[above]{$f_{\det, \lambda}$}(a-1-4);
\path[dashed, ->] (a-2-3) edge node[below right]{$f_{\det, \nu}$}(a-1-4);
\end{tikzpicture}
\end{center}
Although the diagram does not commute, by construction the rational functions $f_{\det, \lambda}, f_{\det, \nu} $ have the same associated reduced divisor $D_{(\prod_i x_i), \eta}$ on $\mathbb{P}^n$ and therefore on $X$.  Theorem~\ref{thm:crit-degree-as-Euler-characteristic} implies then that the number of critical points of both functions is determined by the local Euler obstruction on $X \setminus D_{(\prod_i x_i), \eta}$ and that these points are reduced. We conclude that
\[
\mathrm{Gaussian~ML~degree}(X \subseteq \mathbb{P}(\mathbb{S}^{n+1})) = \mathrm{Gaussian~ML~degree}(X \subseteq \mathbb{P}(\mathbb{S}^{\sum_i u_i})).
\]
 Consider the incidence variety in Remark~\ref{rem:critical-fiber} where we let $F := \prod_i x_i^{u_i}$ and $X \subseteq \mathbb{P}^n$:
\begin{center}
\begin{tikzpicture}[>=angle 90]
\matrix(a)[matrix of math nodes,
row sep=2em, column sep=2em,
text height=1.5ex, text depth=0.25ex]
{&\mathcal{I}&\\
C_X^\circ&&\mathbb{C}^{n+1}.\\};
\path[->] (a-1-2) edge node[above left]{$\pi_1$}(a-2-1);
\path[->] (a-1-2) edge node[above right]{$\pi_2$}(a-2-3);
\end{tikzpicture}
\end{center}
The above argument shows that $\deg(\pi_2) = \mathrm{Gaussian~ML~degree}(X)$.
Now consider the special fiber $\pi_2^{-1}(1,\ldots, 1)$ in this diagram. This fiber is the scheme of critical points of $\ell_{\prod_i x_i^{u_i}, (1,\ldots, 1)}|_{C_X^\circ}$ which in turn by Theorem~\ref{Gaussian-rational-schemes-are-equal} is the scheme of critical points of the rational function in \eqref{eq:disc-likelihood-function}. This is a reduced zero scheme and the count of its points is the discrete ML degree of $X$. There is an open subset $V \subseteq \mathbb{C}^{n+1}$, now containing $(1,1, \ldots, 1)$, such that $\pi_2|_{\pi_2^{-1}(V)}$ is quasifinite (all fibers are finite), see for example \cite[Theorem~10.9(c)]{Mil09}. By the Zariski Main Theorem the map $\pi_2|_{\pi_2^{-1}(V)}$ can be factored as
$$\pi_2^{-1}(V) \overset{i}{\hookrightarrow} W \overset{\pi}{\to} V,$$
where $i$ is an open birational immersion and $\pi$ is a finite map. Because $i$ is birational, $\deg(\pi) = \deg(\pi_2|_{\pi_2^{-1}(V)}) = \deg(\pi_2)$. Because the fibers are finite it follows that: 
\begin{align*}
    \mathrm{Discrete~ML~degree}(X) &= \deg(\pi_2^{-1}(1, \ldots, 1)) =  \#\,\pi_2^{-1}(1, \ldots, 1) \leq \#\,\pi^{-1}(1, \ldots, 1)  \\
    &\leq \deg(\pi) = \deg(\pi_2) = \mathrm{Gaussian~ML~degree}(X),
\end{align*}
where the last inequality follows by \cite[thm~10.12]{Mil09} and recall ``\#'' denotes the set-theoretic count of points. Note that the nature of the first inequality comes from the fact that, to make $W\rightarrow V$ proper, possibly some points were added over $(1,\ldots,1)$ (as demonstrated in Example~\ref{ex: discLMLleqgaussML}). This concludes the proof.
\end{proof}

\begin{example}
\label{ex: discLMLleqgaussML}
Consider the smooth curve $C = V(4x_0x_2 - x_1^2) \subseteq \mathbb{P}^2$. Its Euler characteristic is $2$ because it is a smooth plane conic, hence it is isomorphic to $\mathbb{P}^{1}$. Consider the line cut out by $x_0 + x_1 + x_2$, it is tangent to $C$ in the point $[1:-2:1]$. This gives that the discrete ML degree equals $-(2-(2+1))=1$. However, when considering the embedding of $\mathbb{P}^2$ as the diagonal matrices in $\mathbb{P}(\mathbb{S}^3)$ then the Gaussian ML degree is computed with a generic line which instead intersects $C$ in two points. This gives Gaussian ML degree $-(2-(2+2))=2$.
\end{example}


\newcommand{\etalchar}[1]{$^{#1}$}

\end{document}